\newtheorem{theorem}{Theorem}[section]
\newtheorem{lemma}[theorem]{Lemma}
\newtheorem{proposition}[theorem]{Proposition}
\newtheorem{corollary}[theorem]{Corollary}
\theoremstyle{definition}
\newtheorem{definition}[theorem]{Definition}
\newtheorem{example}[theorem]{Example}
\theoremstyle{remark}
\newtheorem{remark}[theorem]{Remark}
\numberwithin{equation}{section}
\begin{document}

\setcounter{page}{1}

\title[The polar decomposition and centered operators]{The polar decomposition for adjointable operators on Hilbert $C^*$-modules and centered operators}

\author[N. Liu, W. Luo, Q. Xu]{Na Liu, Wei Luo, \MakeLowercase{and} Qingxiang Xu$^{*}$}

\address{$^{}$Department of Mathematics, Shanghai Normal University, Shanghai 200234, PR China.}
\email{\textcolor[rgb]{0.00,0.00,0.84}{liunana0616@163.com;
luoweipig1@163.com; qingxiang\_xu@126.com}}

%\address{$^{2}$Department of Pure Mathematics, Ferdowsi University of Mashhad, P. O. Box 1159, Mashhad 91775, Iran;
%\newline
%Tusi Mathematical Research Group (TMRG), Mashhad, Iran.}
%\email{\textcolor[rgb]{0.00,0.00,0.84}{second@afa.ac.ir}}

%\dedicatory{This paper is dedicated to Professor ABCD}

%\let\thefootnote\relax\footnote{Copyright 2016 by the Tusi Mathematical Research Group.}

\subjclass[2010]{Primary 46L08; Secondary 47A05.}

\keywords{Hilbert $C^*$-module, polar decomposition, centered operator.}

%\date{Received: xxxxxx; Accepted: zzzzzz.
%\newline \indent $^{*}$Corresponding author
%\newline $^\diamond$ Advance publication -- final volume, issue, and page numbers to be assigned.}

\begin{abstract}
Let $T$ be an adjointable operator between two Hilbert $C^*$-modules and $T^*$ be the adjoint operator of $T$. The polar decomposition of $T$ is characterized as $T=U(T^*T)^\frac12$ and  $\mathcal{R}(U^*)=\overline{\mathcal{R}(T^*)}$, where $U$ is a partial isometry,
$\mathcal{R}(U^*)$ and $\overline{\mathcal{R}(T^*)}$ denote the range of $U^*$ and the norm closure of the range of $T^*$,  respectively.
Based on this new characterization of the polar decomposition, an application to the study of centered operators is carried out.\\
\end{abstract} \maketitle

\section{Introduction}

Much progress has been made in the study of the polar decomposition for Hilbert space operators \cite{Furuta,Gesztesy-Malamud-Mitrea-Naboko,Halmos,Ichinose-Iwashita,Stochel-Szafraniec}.
Let $H, K$ be two Hilbert spaces and $\mathbb{B}(H, K)$ be the set of bounded linear operators from $H$ to $K$. For any $T\in\mathbb{B}(H, K)$, let $T^*$, $\mathcal{R}(T)$ and $\mathcal{N}(T)$ denote the conjugate operator, the range and the null space of $T$, respectively. It is well-known \cite{Halmos,Ichinose-Iwashita} that every operator $T\in\mathbb{B}(H,K)$ has the unique polar decomposition
\begin{equation}\label{equ:polar decomposition-Hilbert space case} T=U|T|\ \mbox{and}\ \mathcal{N}(T)=\mathcal{N}(U),
\end{equation}
where $|T|=(T^*T)^{\frac12}$ and $U\in\mathbb{B}(H,K)$ is a partial isometry. An alternative expression of \eqref{equ:polar decomposition-Hilbert space case} is
\begin{equation}\label{equ:polar decomposition-Hilbert space case-1}
T=U|T|\ \mbox{and}\ \overline{\mathcal{R}(T^*)}=\mathcal{R}(U^*),
\end{equation}
 since $\mathcal{N}(T)^{\bot}=\overline{\mathcal{R}(T^*)}$ and $\mathcal{N}(U)^{\bot}=\overline{\mathcal{R}(U^*)}=\mathcal{R}(U^*)$ in the Hilbert space case.
Note that if $H=K$, then $\mathbb{B}(H,H)$ abbreviated to $\mathbb{B}(H)$, is a von Neumann algebra. It follows from \cite[Proposition~2.2.9]{Pedersen} that the polar decomposition also works for elements in a von Neumann algebra. Nevertheless, it may be false for some elements in a general $C^*$-algebra; see \cite[Remark~1.4.6]{Pedersen}.

Both Hilbert spaces and $C^*$-algebras can be regarded as Hilbert $C^*$-modules, so one might study the polar decomposition in the general setting of Hilbert $C^*$-modules. An adjointable operator between Hilbert $C^*$-modules may have no polar decomposition unless some additional conditions are satisfied; see Lemma~\ref{lem:Wegge-Olsen} below for the details.
The polar decomposition for densely defined closed operators and unbounded operators are also considered in some literatures; see \cite{Frank-Sharifi,Gebhardt-Schm¨¹dgen,Gesztesy-Malamud-Mitrea-Naboko} for example.

The purpose of this paper is, in the general setting of adjointable operators on Hilbert $C^*$-modules, to provide a new insight into the polar decomposition theory and its applications. We will prove in Lemma~\ref{lem:one plus one implies one plus four} that five equalities appearing in Lemma~\ref{lem:Wegge-Olsen} (iii) can be in fact simplified to two equalities described in \eqref{equ:defn of polar decomposition for T-1}, which  are evidently the same as that in \eqref{equ:polar decomposition-Hilbert space case-1} when the underlying spaces are Hilbert spaces. It is remarkable that \eqref{equ:polar decomposition-Hilbert space case} is a widely used characterization of the polar decomposition for Hilbert space operators. Nevertheless, Example~\ref{ex:countable example} indicates that such a characterization of the polar decomposition is no longer true for adjointable operators on Hilbert $C^*$-modules. This leads us to figure out a modified version of \eqref{equ:polar decomposition-Hilbert space case}, which is stated in Theorem~\ref{thm:uniqueness of polar decomposition-1}. Note that the verification of the equivalence of Lemma~\ref{lem:Wegge-Olsen} (i) and (ii) is trivial, so it is meaningful to give another interpretation of Lemma~\ref{lem:Wegge-Olsen} (ii). We have managed to do that in Theorem~\ref{thm:existence condition for polar decomposition} (iii).

 One application of the polar decomposition is the study of centered operators on Hilbert spaces, which was initiated in \cite{Morrel} and generalized in \cite{Ito,Ito-Yamazaki-Yanagida}. Based on the new characterization \eqref{equ:defn of polar decomposition for T-1} of the polar decomposition for adjointable operators, some generalizations on centered operators are made in the framework of Hilbert $C^*$-modules.

The paper is organized as follows. Some elementary results on adjointable operators are provided in Section~\ref{sec:preliminaries}.
In Section~\ref{sec:the polar decomposition}, we focus on the study of the polar decomposition for adjointable operators on Hilbert $C^*$-modules.
As an application of the polar decomposition, centered operators are studied in Section~\ref{sec:defn of centered operators}.

\section{Some elementary results on adjointable operators}\label{sec:preliminaries}
Hilbert $C^*$-modules are generalizations of Hilbert spaces by allowing inner products to take values in some $C^{*}$-algebras instead of the complex  field. Let $\mathfrak{A}$ be a $C^*$-algebra. An
inner-product $\mathfrak{A}$-module is a linear space $E$ which is a right
$\mathfrak{A}$-module, together with a map $(x,y)\to \big<x,y\big>:E\times E\to
\mathfrak{A}$ such that for any $x,y,z\in E, \alpha, \beta\in \mathbb{C}$ and
$a\in \mathfrak{A}$, the following conditions hold:
\begin{enumerate}
\item[(i)] $\langle x,\alpha y+\beta
z\rangle=\alpha\langle x,y\rangle+\beta\langle x,z\rangle;$

\item[(ii)] $\langle x, ya\rangle=\langle x,y\rangle a;$

\item[(iii)] $\langle y,x\rangle=\langle x,y\rangle^*;$

\item[(iv)] $\langle x,x\rangle\ge 0, \ \mbox{and}\ \langle x,x\rangle=0\Longleftrightarrow x=0.$
\end{enumerate}

An inner-product $\mathfrak{A}$-module $E$ which is complete with respect to
the induced norm ($\Vert x\Vert=\sqrt{\Vert \langle x,x\rangle\Vert}$ for
$x\in E$) is called a (right) Hilbert $\mathfrak{A}$-module.

Suppose that $H$ and $K$ are two Hilbert $\mathfrak{A}$-modules, let ${\mathcal L}(H,K)$
be the set of operators $T:H\to K$ for which there is an operator $T^*:K\to
H$ such that $$\langle Tx,y\rangle=\langle x,T^*y\rangle \ \mbox{for any
$x\in H$ and $y\in K$}.$$  We call ${\mathcal
L}(H,K)$ the set of adjointable operators from $H$ to $K$. For any
$T\in {\mathcal L}(H,K)$, the range and the null space of $T$ are denoted by
${\mathcal R}(T)$ and ${\mathcal N}(T)$, respectively. In case
$H=K$, ${\mathcal L}(H,H)$ which is abbreviated to ${\mathcal L}(H)$, is a
$C^*$-algebra. Let ${\mathcal L}(H)_{sa}$ and ${\mathcal L}(H)_+$ be the set of  self-adjoint elements and positive elements
in $\mathcal{L}(H)$, respectively.

\begin{definition}
A closed
submodule $M$ of a Hilbert $\mathfrak{A}$-module $E$ is said to be
orthogonally complemented  if $E=M\dotplus M^\bot$, where
$$M^\bot=\big\{x\in E: \langle x,y\rangle=0\ \mbox{for any}\ y\in
M\big\}.$$
In this case, the projection from $H$ onto $M$ is denoted by $P_M$.
\end{definition}

Throughout the rest of this paper, $\mathfrak{A}$ is a $C^*$-algebra, $E,H$ and $K$ are three  Hilbert $\mathfrak{A}$-modules. Note that  $\mathcal{L}(H)$ is a $C^*$-algebra, so we begin with an elementary result on $C^*$-algebras.

\begin{definition}\label{defn:commutator of a-b}
 Let  $\mathfrak{B}$  be a $C^*$-algebra. The set of positive elements of $\mathfrak{B}$ is denoted by $\mathfrak{B}_+$.  For any $a,b\in \mathfrak{B}$,  let $[a,b]=ab-ba$ be the commutator of $a$ and $b$.
\end{definition}

\begin{proposition}\label{prop:commutative property extended-1} Let $\mathfrak{B}$ be a  $C^*$-algebra  and let $a,b\in \mathfrak{B}$ be such that $a=a^*$ and $[a,b]=0$.
Then  $[f(a),b]=0$  whenever $f$ is a continuous complex-valued function on the interval $[-\Vert a\Vert, \Vert a\Vert]$.
\end{proposition}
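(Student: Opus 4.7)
The plan is to reduce the general continuous function case to the polynomial case via the Weierstrass approximation theorem, and then handle polynomials by a routine induction on the degree.

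First I would observe that since $a=a^{*}$, the spectrum $\sigma(a)$ is contained in $[-\Vert a\Vert,\Vert a\Vert]$, so the continuous functional calculus assigns to every $f\in C([-\Vert a\Vert,\Vert a\Vert])$ an element $f(a)\in\mathfrak{B}$, and the map $f\mapsto f(a)$ is a $*$-homomorphism which satisfies $\Vert f(a)\Vert\le \Vert f\Vert_{\infty}$ on that interval (using continuity of restriction to $\sigma(a)$). This gives the functional-calculus framework I need in order to take limits.

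Next I would show by induction on $n\ge 0$ that $[a^{n},b]=0$. The base cases $n=0,1$ are immediate, and the inductive step uses $[a,b]=0$ in the form $ab=ba$ to push one factor of $a$ past $b$:
\begin{equation*}
a^{n+1}b=a\cdot a^{n}b=a\cdot ba^{n}=ab\cdot a^{n}=ba\cdot a^{n}=ba^{n+1}.
\end{equation*}
Linearity then gives $[p(a),b]=0$ for every polynomial $p$ in one complex variable with no complex-conjugate terms; since $[-\Vert a\Vert,\Vert a\Vert]\subset\mathbb{R}$, such polynomials already suffice for Weierstrass approximation of complex-valued continuous functions on this interval.

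Finally I would choose a sequence of polynomials $\{p_{n}\}$ converging uniformly to $f$ on $[-\Vert a\Vert,\Vert a\Vert]$. By the norm bound of the continuous functional calculus, $\Vert p_{n}(a)-f(a)\Vert\to 0$, hence
\begin{equation*}
[f(a),b]=\lim_{n\to\infty}\bigl(p_{n}(a)b-bp_{n}(a)\bigr)=\lim_{n\to\infty}[p_{n}(a),b]=0,
\end{equation*}
using that left- and right-multiplication by $b$ are norm-continuous. I do not foresee a genuine obstacle; the only point that requires a moment's care is ensuring that the Weierstrass approximation is applied to a \emph{real} interval so that no $\bar z$-monomials appear and the polynomial-level commutation $[p(a),b]=0$ remains valid for complex-valued $f$.
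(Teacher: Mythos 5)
Your proof is correct and follows essentially the same route as the paper: approximate $f$ uniformly by polynomials on $[-\Vert a\Vert,\Vert a\Vert]$, note that $[p(a),b]=0$ for polynomials, and pass to the norm limit using continuity of multiplication. The only detail you fill in more explicitly than the paper is the induction giving $[a^n,b]=0$; conversely, the paper explicitly passes to the unitization (``we might as well assume $\mathfrak{B}$ has a unit''), which your argument also tacitly needs for the constant term and for defining $f(a)$ when $f(0)\neq 0$.
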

\begin{proof}We might as well assume that $\mathfrak{B}$ has a unit. Choose any sequence $\{p_n\}_{n=1}^\infty$ of polynomials  such that $p_n(t)\to f(t)$ uniformly on  the interval $[-\Vert a\Vert, \Vert a\Vert]$. Then $\Vert p_n(a)-f(a)\Vert\to 0$ as $n\to \infty$, hence
\begin{equation*}f(a)b=\lim_{n\to\infty}p_n(a)b=\lim_{n\to\infty}b\,p_n(a)=bf(a).\qedhere
\end{equation*}
\end{proof}

Next, we state some elementary results on the commutativity of adjointable operators. For any $\alpha>0$, the function $f(t)=t^\alpha$  is continuous on $[0,+\infty)$, so a direct application of Proposition~\ref{prop:commutative property extended-1} yields the following proposition:

\begin{proposition}\label{prop:commutative property extended-3} Let $S\in \mathcal{L}(H)$ and $T\in\mathcal{L}(H)_+$ be such that $[S,T]=0$. Then
$[S,T^\alpha]=0$ for any $\alpha>0$.
\end{proposition}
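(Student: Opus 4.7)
The plan is a direct application of Proposition~\ref{prop:commutative property extended-1}, taken inside the $C^*$-algebra $\mathfrak{B} = \mathcal{L}(H)$, with $a = T$ and $b = S$. Since $T \in \mathcal{L}(H)_+$ we automatically have $T = T^*$, and the commutativity hypothesis $[S,T] = 0$ is exactly $[T,S] = 0$ (up to sign), so both assumptions of Proposition~\ref{prop:commutative property extended-1} are satisfied.

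The only minor technicality is that Proposition~\ref{prop:commutative property extended-1} is phrased for continuous complex-valued functions on the symmetric interval $[-\|T\|, \|T\|]$, whereas the power $t \mapsto t^\alpha$ is naturally defined only on $[0, +\infty)$. I would resolve this by choosing any continuous extension of $t \mapsto t^\alpha$ from $[0, \|T\|]$ to all of $[-\|T\|, \|T\|]$ — for instance $f(t) = |t|^\alpha$. Because $T \geq 0$ its spectrum lies in $[0, \|T\|]$, so the continuous functional calculus for $T$ depends only on the values of $f$ on the nonnegative part of the interval, and therefore $f(T) = T^\alpha$ regardless of the extension chosen.

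Applying Proposition~\ref{prop:commutative property extended-1} then yields $[f(T), S] = 0$, which rewrites as $[T^\alpha, S] = 0$, equivalently $[S, T^\alpha] = 0$. There is no real obstacle in this argument; the only point worth a sentence is the continuous extension step just described, after which the result is immediate from the preceding proposition.
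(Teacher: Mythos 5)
Your proposal is correct and is essentially the same argument the paper uses: the paper derives Proposition~\ref{prop:commutative property extended-3} as a direct application of Proposition~\ref{prop:commutative property extended-1} with $f(t)=t^\alpha$. Your extra remark about extending $t\mapsto t^\alpha$ continuously to the symmetric interval $[-\Vert T\Vert,\Vert T\Vert]$ (e.g.\ via $|t|^\alpha$) and noting that the functional calculus only sees the spectrum of $T\ge 0$ is a small point of care that the paper glosses over, but it does not change the route.
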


The technical result of this section is as follows:
\begin{proposition}\label{prop:SOT topology of the projection of the range-1}Let $T\in\mathcal{L}(H)_+$ be such that $\overline{\mathcal{R}(T)}$ is orthogonally complemented.
Then \begin{equation} \label{equ:characterization of projection is SOT-topology}\lim_{n\to\infty}\Vert T_nx-P_{\overline{\mathcal{R}(T)}}x\Vert=0\ \mbox{for all $x$ in $H$},\end{equation}
where $T_n=\left(\frac{1}{n}I+T\right)^{-1}T$  for each  $n\in \mathbb{N}$.
\end{proposition}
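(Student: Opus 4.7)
The plan is to exploit the orthogonal decomposition $H=\overline{\mathcal{R}(T)}\dotplus\overline{\mathcal{R}(T)}^{\bot}$ together with the identity $\overline{\mathcal{R}(T)}^{\bot}=\mathcal{N}(T)$ (which holds since $T=T^*$, from the general fact $\mathcal{R}(T)^\bot=\mathcal{N}(T^*)$ for adjointable operators) and reduce the pointwise convergence statement to a simple algebraic estimate on $\mathcal{R}(T)$ plus a density argument. First I would observe that for each $n$ the operator $\frac{1}{n}I+T$ is invertible in $\mathcal{L}(H)$ (its spectrum lies in $[\tfrac{1}{n},\tfrac{1}{n}+\|T\|]$), so $T_n=\bigl(\frac{1}{n}I+T\bigr)^{-1}T$ is well-defined; moreover $T_n=f_n(T)$ with $f_n(t)=t/(\tfrac{1}{n}+t)$, and since $0\le f_n(t)\le 1$ on $[0,\|T\|]$, the continuous functional calculus gives $\|T_n\|\le 1$. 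Proposition~\ref{prop:commutative property extended-1} (or Proposition~\ref{prop:commutative property extended-3}) ensures that $T$, $T_n$ and $\bigl(\frac{1}{n}I+T\bigr)^{-1}$ all commute with one another.

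Write any $x\in H$ as $x=y+z$ with $y=P_{\overline{\mathcal{R}(T)}}x\in\overline{\mathcal{R}(T)}$ and $z\in\mathcal{N}(T)$. Since $Tz=0$, also $T_nz=0$, while $P_{\overline{\mathcal{R}(T)}}z=0$, so the two sides of \eqref{equ:characterization of projection is SOT-topology} agree on $z$ and only the action on $y$ matters. The key algebraic identity is
\begin{equation*}
y-T_n y=\Bigl(\tfrac{1}{n}I+T\Bigr)^{-1}\Bigl[\bigl(\tfrac{1}{n}I+T\bigr)y-Ty\Bigr]=\tfrac{1}{n}\Bigl(\tfrac{1}{n}I+T\Bigr)^{-1}y.
\end{equation*}
If in addition $y=Tw$ for some $w\in H$, then $\bigl(\frac{1}{n}I+T\bigr)^{-1}y=T_n w$, and the contractivity $\|T_n\|\le 1$ yields
\begin{equation*}
\|y-T_n y\|=\tfrac{1}{n}\|T_n w\|\le \tfrac{1}{n}\|w\|\longrightarrow 0.
\end{equation*}
So on $\mathcal{R}(T)$ the convergence $T_n y\to y=P_{\overline{\mathcal{R}(T)}}y$ is immediate.

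Finally I would extend from $\mathcal{R}(T)$ to $\overline{\mathcal{R}(T)}$ by a standard $3\varepsilon$-argument: given $y\in\overline{\mathcal{R}(T)}$ and $\varepsilon>0$, pick $y'\in\mathcal{R}(T)$ with $\|y-y'\|<\varepsilon$; then, using $\|T_n\|\le 1$,
\begin{equation*}
\|y-T_n y\|\le\|y-y'\|+\|y'-T_n y'\|+\|T_n\|\cdot\|y'-y\|\le 2\varepsilon+\|y'-T_n y'\|,
\end{equation*}
and the previous step forces $\limsup_n\|y-T_n y\|\le 2\varepsilon$. Combining the two components proves \eqref{equ:characterization of projection is SOT-topology}.

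The only delicate point is the justification of the decomposition of $x$ in the first place; this is exactly where the orthogonal complementability hypothesis on $\overline{\mathcal{R}(T)}$ is used (in a general Hilbert $C^*$-module a closed submodule need not admit an orthogonal complement, so without that hypothesis even the projection $P_{\overline{\mathcal{R}(T)}}$ would not be defined). Everything else is routine functional calculus and boundedness of $T_n$, so I do not anticipate a genuine obstacle.
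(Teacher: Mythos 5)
Your proposal is correct and follows essentially the same route as the paper: decompose $x$ using the orthogonal complement (noting $\overline{\mathcal{R}(T)}^{\bot}=\mathcal{N}(T)$ by self-adjointness), kill the null-space component, prove convergence on $\mathcal{R}(T)$, and extend to the closure by a $3\varepsilon$-argument using $\Vert T_n\Vert\le 1$. The only cosmetic difference is that you obtain the key estimate $\Vert Tw-T_nTw\Vert\le\frac{1}{n}\Vert w\Vert$ from the resolvent identity, whereas the paper reads off the operator-norm bound $\Vert T_nT-T\Vert\le\frac{1}{n}$ from the functional calculus.
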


\begin{proof}For  each $n\in\mathbb{N}$,  the continuous function $f_n$ associated to the operator $T_n$ is given by
$$f_n(t)=\frac{t}{\frac{1}{n}+t}\ \mbox{for}\  t\in \mbox{sp}(T)\subseteq [0, \Vert T\Vert],$$ where $\mbox{sp}(T)$ is the spectrum of $T$. Then for each $n\in\mathbb{N}$,
\begin{eqnarray*}&&\Vert T_n\Vert=\max\left\{\big|f_n(t)\big|:t\in \mbox{sp}(T)\right\}\le 1;\\
&&\Vert T_nT-T\Vert=\max\left\{\big|tf_n(t)-t\big|:t\in \mbox{sp}(T)\right\}\le \frac{1}{n}.\end{eqnarray*}

Now, given any $x\in H$ and any $\varepsilon>0$, let $x=u+v$, where $u\in\overline{\mathcal{R}(T)}$ and $v\in\mathcal{N}(T)\subseteq \mathcal{N}(T_n)$ for any $n\in\mathbb{N}$. Choose $h\in H$ and $n_0\in\mathbb{N}$  such that
\begin{equation*}\Vert u-Th\Vert<\frac{\varepsilon}{3}\ \mbox{and}\ n_0>\frac{3(\Vert h\Vert+1)}{\varepsilon}.\end{equation*}
Then for any $n\in\mathbb{N}$ with $n\ge n_0$, we have
\begin{align*}&\Vert T_nx-P_{\overline{\mathcal{R}(T)}}x\Vert=\Vert T_nu-P_{\overline{\mathcal{R}(T)}}u\Vert\\
&\le\Vert T_nu-T_nTh\Vert+\Vert T_nTh-P_{\overline{\mathcal{R}(T)}}Th\Vert+\Vert P_{\overline{\mathcal{R}(T)}}Th-P_{\overline{\mathcal{R}(T)}}u\Vert\\
&\le\Vert T_n(u-Th)\Vert+\Vert (T_nT)h-Th\Vert+\Vert P_{\overline{\mathcal{R}(T)}}(Th-u)\Vert\\
&\le\Vert u-Th\Vert+\frac{1}{n} \Vert h\Vert+\Vert Th-u\Vert<\frac{\varepsilon}{3}+\frac{\varepsilon}{3}+\frac{\varepsilon}{3}=\varepsilon.
\end{align*}
This completes the proof of \eqref{equ:characterization of projection is SOT-topology}.
\end{proof}

Based on Proposition~\ref{prop:SOT topology of the projection of the range-1}, a result on the commutativity for adjointable operators can be provided as follows:

\begin{proposition}\label{prop:commutative property extended-33} Let $S\in \mathcal{L}(H)$ and let  $T\in\mathcal{L}(H)_+$ be such that $\overline{\mathcal{R}(T)}$ is orthogonally complemented. If  $[S,T]=0$, then $\left[S,  P_{\overline{\mathcal{R}(T)}}\right]=0$.
\end{proposition}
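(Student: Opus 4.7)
The plan is to combine Proposition~\ref{prop:commutative property extended-1} with the approximation formula of Proposition~\ref{prop:SOT topology of the projection of the range-1}, thereby passing the commutativity with $T$ up to its range projection through the resolvent-type sequence $T_n=\left(\frac{1}{n}I+T\right)^{-1}T$.

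First I would observe that $T_n=f_n(T)$ for the continuous function $f_n(t)=\frac{t}{\frac{1}{n}+t}$, which is defined on $[0,\Vert T\Vert]$. Since $\mathrm{sp}(T)\subseteq[0,\Vert T\Vert]$, the value of $f_n(T)$ is unaffected by any continuous extension of $f_n$ to $[-\Vert T\Vert,\Vert T\Vert]$, so Proposition~\ref{prop:commutative property extended-1} (applied in the $C^*$-algebra $\mathcal{L}(H)$ with $a=T=T^*$ and $b=S$) is directly applicable and gives $[S,T_n]=0$ for every $n\in\mathbb{N}$.

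Next I would fix $x\in H$ and apply Proposition~\ref{prop:SOT topology of the projection of the range-1} simultaneously at $x$ and at $Sx$ to obtain
\begin{equation*}
T_n x\longrightarrow P_{\overline{\mathcal{R}(T)}}x,\qquad T_n(Sx)\longrightarrow P_{\overline{\mathcal{R}(T)}}Sx
\end{equation*}
in norm. Since $S\in\mathcal{L}(H)$ is bounded, the left-hand convergence yields $S(T_n x)\to SP_{\overline{\mathcal{R}(T)}}x$; on the other hand, $ST_nx=T_nSx\to P_{\overline{\mathcal{R}(T)}}Sx$ by the right-hand convergence. Equating the two limits gives $SP_{\overline{\mathcal{R}(T)}}x=P_{\overline{\mathcal{R}(T)}}Sx$ for every $x\in H$, which is the desired identity $\bigl[S,P_{\overline{\mathcal{R}(T)}}\bigr]=0$.

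There is no real obstacle here; the argument is essentially a one-line assembly of the two preceding propositions. The only point that requires mild care is the domain issue in invoking Proposition~\ref{prop:commutative property extended-1}, since $f_n$ is naturally defined on $[0,\Vert T\Vert]$ rather than on the symmetric interval $[-\Vert T\Vert,\Vert T\Vert]$, but this is resolved by a trivial continuous extension that does not change the operator $f_n(T)$.
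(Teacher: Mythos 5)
Your proposal is correct and follows essentially the same route as the paper: establish $[S,T_n]=0$ for $T_n=\left(\frac1n I+T\right)^{-1}T$ and then pass to the limit using Proposition~\ref{prop:SOT topology of the projection of the range-1} at both $x$ and $Sx$. Your explicit appeal to Proposition~\ref{prop:commutative property extended-1} (with the harmless extension of $f_n$ to a symmetric interval) just fills in a detail the paper leaves implicit.
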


\begin{proof} Denote $P_{\overline{\mathcal{R}(T)}}$ simply by $P$. Since $[S,T]=0$, we have $[S,T_n]=0$, where $T_n$ ($n\in\mathbb{N}$) are given in Proposition~\ref{prop:SOT topology of the projection of the range-1}. It follows from \eqref{equ:characterization of projection is SOT-topology} that \begin{eqnarray*}P(Sx)=\lim_{n\to\infty}T_n(Sx)=\lim_{n\to\infty}S(T_nx)=S(Px)\ \mbox{for any $x\in H$}.\qedhere
\end{eqnarray*}
\end{proof}

We end this section by stating some range equalities for adjointable operators.

\begin{proposition}\label{prop:rang characterization-1} Let $A\in\mathcal{L}(H,K)$ and $B,C\in\mathcal{L}(E,H)$ be such that $\overline{\mathcal{R}(B)}=\overline{\mathcal{R}(C)}$. Then $\overline{\mathcal{R}(AB)}=\overline{\mathcal{R}(AC)}$.
\end{proposition}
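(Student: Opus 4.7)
The plan is to reduce the statement to the standard fact that a bounded linear map carries closures into closures. Since every adjointable operator between Hilbert $C^*$-modules is automatically bounded, $A$ is a continuous map from $H$ to $K$, so for any subset $S\subseteq H$ one has $A(\overline{S})\subseteq \overline{A(S)}$; taking closures on both sides and using $A(S)\subseteq A(\overline{S})$ gives $\overline{A(\overline{S})}=\overline{A(S)}$.

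Applying this observation with $S=\mathcal{R}(B)$ and with $S=\mathcal{R}(C)$ yields
\begin{equation*}
\overline{\mathcal{R}(AB)}=\overline{A(\mathcal{R}(B))}=\overline{A(\overline{\mathcal{R}(B)})}\quad\text{and}\quad \overline{\mathcal{R}(AC)}=\overline{A(\overline{\mathcal{R}(C)})}.
\end{equation*}
The hypothesis $\overline{\mathcal{R}(B)}=\overline{\mathcal{R}(C)}$ then immediately identifies the two right-hand sides.

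If one prefers a more direct argument, I would simply take $x\in E$ and approximate $Bx\in\overline{\mathcal{R}(B)}=\overline{\mathcal{R}(C)}$ by a sequence $Cy_n$; then $ABx=\lim_n ACy_n\in\overline{\mathcal{R}(AC)}$, so $\mathcal{R}(AB)\subseteq\overline{\mathcal{R}(AC)}$ and hence $\overline{\mathcal{R}(AB)}\subseteq\overline{\mathcal{R}(AC)}$. The reverse inclusion follows by symmetry.

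There is no substantive obstacle here: the statement is really just the continuity of the bounded operator $A$, and neither the $C^*$-algebraic structure nor any previously developed machinery (polar decomposition, orthogonal complementability, etc.) is needed. The only thing worth flagging explicitly is the boundedness of $A$, which is implicit in its membership in $\mathcal{L}(H,K)$.
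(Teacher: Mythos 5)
Your proposal is correct and matches the paper's argument: the ``more direct argument'' you sketch (approximating $Bx$ by a sequence $Cy_n$ and using continuity of $A$, then symmetry) is exactly the proof given in the paper, and your first formulation via $\overline{A(\overline{S})}=\overline{A(S)}$ is just the same continuity fact stated abstractly.
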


\begin{proof} Let $x\in E$ be arbitrary. Since $Bx\in\overline{\mathcal{R}(C)}$, there exists a sequence $\{x_n\}$ in $E$ such that $Cx_n\to Bx$. Then $ACx_n\to ABx$, which means
$ABx\in\overline{\mathcal{R}(AC)}$, and thus $\mathcal{R}(AB)\subseteq \overline{\mathcal{R}(AC)}$ and furthermore
$\overline{\mathcal{R}(AB)}\subseteq \overline{\mathcal{R}(AC)}$. Similarly, we have $\overline{\mathcal{R}(AC)}\subseteq \overline{\mathcal{R}(AB)}$.
\end{proof}

\begin{lemma} \label{lem:Range Closure of Ta and T} {\rm\cite[Lemma 2.3]{Xu-Fang}} Let $T\in \mathcal{L}(H)_+$. Then $\overline{\mathcal{R}(T^{\alpha})}=\overline{\mathcal{R}(T)}$ for any $\alpha\in(0,1)$.
\end{lemma}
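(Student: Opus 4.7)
The plan is to establish the two inclusions $\overline{\mathcal{R}(T^{\alpha})} \subseteq \overline{\mathcal{R}(T)}$ and $\overline{\mathcal{R}(T)} \subseteq \overline{\mathcal{R}(T^{\alpha})}$ separately, working entirely inside the continuous functional calculus generated by the positive element $T$. The spectrum satisfies $\mathrm{sp}(T) \subseteq [0,\|T\|]$, and the map $f \mapsto f(T)$ is an isometric $*$-homomorphism from $C(\mathrm{sp}(T))$ into $\mathcal{L}(H)$, which will be the main tool throughout.

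For the easy direction $\overline{\mathcal{R}(T)} \subseteq \overline{\mathcal{R}(T^{\alpha})}$, I would use that $1-\alpha > 0$, so $T^{1-\alpha} \in \mathcal{L}(H)_+$ is well-defined and the identity $T = T^{\alpha} T^{1-\alpha}$ holds by functional calculus. This instantly gives $Tx = T^{\alpha}(T^{1-\alpha}x) \in \mathcal{R}(T^{\alpha})$ for every $x \in H$, hence $\mathcal{R}(T) \subseteq \mathcal{R}(T^{\alpha})$, from which the closure inclusion follows.

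The harder direction is $\overline{\mathcal{R}(T^{\alpha})} \subseteq \overline{\mathcal{R}(T)}$. My strategy is to exploit the fact that $f(t) = t^{\alpha}$ is continuous on $[0,\|T\|]$ and vanishes at $t=0$, so it should be approximable in norm by operators factoring through $T$. Concretely, I would pick by Weierstrass a sequence of polynomials $p_n$ with $p_n \to f$ uniformly on $[0,\|T\|]$ and then set $q_n(t) = p_n(t) - p_n(0)$; since $p_n(0) \to f(0) = 0$, we still have $q_n \to f$ uniformly, and now $q_n(0) = 0$, so $q_n(t) = t\, r_n(t)$ for some polynomial $r_n$. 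Functional calculus then yields $\|q_n(T) - T^{\alpha}\| \to 0$ together with $q_n(T) = T\, r_n(T)$, so for every $x \in H$ the vectors $q_n(T)x \in \mathcal{R}(T)$ converge to $T^{\alpha}x$, placing $T^{\alpha}x$ in $\overline{\mathcal{R}(T)}$.

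The only slightly delicate point I anticipate is the normalization step that replaces $p_n$ by polynomials $q_n$ vanishing at the origin, which is what secures the crucial inclusion $\mathcal{R}(q_n(T)) \subseteq \mathcal{R}(T)$; the rest is routine continuous functional calculus. Notably, no orthogonal-complementability hypothesis on $\overline{\mathcal{R}(T)}$ will be required, so Proposition~\ref{prop:SOT topology of the projection of the range-1} is not needed here, and the argument transfers unchanged from Hilbert spaces to adjointable operators on Hilbert $C^*$-modules, which is exactly why the statement can be invoked in the general setting of this paper.
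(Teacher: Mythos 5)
Your argument is correct. Note, however, that the paper itself gives no proof of this lemma: it is quoted verbatim from \cite[Lemma 2.3]{Xu-Fang}, so there is no in-paper argument to compare against. Your two inclusions are exactly the standard ones: the factorization $T=T^{\alpha}T^{1-\alpha}$ (valid since $1-\alpha>0$) gives $\mathcal{R}(T)\subseteq\mathcal{R}(T^{\alpha})$ outright, and the polynomial approximation of $t\mapsto t^{\alpha}$ by polynomials vanishing at $0$ gives $\mathcal{R}(T^{\alpha})\subseteq\overline{\mathcal{R}(T)}$. The normalization $q_n=p_n-p_n(0)$ is indeed the one point that needs care, and you handle it properly; it is the same device the paper uses later in the proof of Lemma~\ref{lem:relationship between |T| and |T-star| alpha}, where the polynomials are likewise required to vanish at the origin. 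You are also right that no orthogonal complementability is needed, since everything takes place in the commutative $C^*$-subalgebra generated by $T$ and the unit of $\mathcal{L}(H)$; this is consistent with the fact that the lemma is invoked in Proposition~\ref{prop:Range Closure of T alpha and T} without any such hypothesis.
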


\begin{proposition} \label{prop:Range Closure of T alpha and T} Let $T\in \mathcal{L}(H)_+$. Then $\overline{\mathcal{R}(T^{\alpha})}=\overline{\mathcal{R}(T)}$ for any $\alpha>0$.
\end{proposition}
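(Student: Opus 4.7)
The plan is to reduce everything to the already-proven range identity in Lemma~\ref{lem:Range Closure of Ta and T} by exploiting the fact that $T^\alpha$ is itself a positive adjointable operator and that $T = (T^\alpha)^{1/\alpha}$ by continuous functional calculus on $\mathcal{L}(H)_+$.

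First I would dispose of the easy cases: $\alpha = 1$ is trivial, and $\alpha\in(0,1)$ is exactly the content of Lemma~\ref{lem:Range Closure of Ta and T}. This leaves only the case $\alpha > 1$.

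For $\alpha > 1$, I would set $S := T^\alpha$. Since $T \in \mathcal{L}(H)_+$, the continuous functional calculus guarantees $S \in \mathcal{L}(H)_+$, and moreover $S^{1/\alpha} = (T^\alpha)^{1/\alpha} = T$ because $(t^\alpha)^{1/\alpha} = t$ for all $t \in \mathrm{sp}(T) \subseteq [0,+\infty)$. Observing that $1/\alpha \in (0,1)$, I can then apply Lemma~\ref{lem:Range Closure of Ta and T} to the positive operator $S$ with exponent $1/\alpha$, yielding
\begin{equation*}
\overline{\mathcal{R}(T)} \;=\; \overline{\mathcal{R}(S^{1/\alpha})} \;=\; \overline{\mathcal{R}(S)} \;=\; \overline{\mathcal{R}(T^\alpha)}.
\end{equation*}

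There is no real obstacle here; the only point requiring a little care is the identification $S^{1/\alpha} = T$, which is immediate from the composition rule for the continuous functional calculus applied to the positive operator $T$. This completes the proof in all three regimes.
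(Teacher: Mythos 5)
Your proof is correct and follows essentially the same route as the paper: reduce to the case $\alpha>1$, set $S=T^\alpha$, note $S^{1/\alpha}=T$ by functional calculus, and apply Lemma~\ref{lem:Range Closure of Ta and T} to $S$ with exponent $1/\alpha\in(0,1)$. The only difference is that you spell out the justification of $S^{1/\alpha}=T$, which the paper leaves implicit.
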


\begin{proof}In view of Lemma~\ref{lem:Range Closure of Ta and T}, we might as well assume that $\alpha>1$. Put $S=T^\alpha$. Then $S\in\mathcal{L}(H)_+$, so from Lemma~\ref{lem:Range Closure of Ta and T} we have
$$\overline{\mathcal{R}(T)}=\overline{\mathcal{R}(S^{\frac{1}{\alpha}})}=\overline{\mathcal{R}(S)}=\overline{\mathcal{R}(T^{\alpha})}.\qedhere$$
\end{proof}

\section{The polar decomposition for adjointable operators}\label{sec:the polar decomposition}

In this section, we study the polar decomposition for adjointable operators on Hilbert $C^*$-modules.

\begin{definition} Recall that an element $U$ of $\mathcal{L}(H,K)$ is said to be a partial isometry if $U^*U$ is a projection in
$\mathcal{L}(H)$.
\end{definition}

\begin{proposition}\label{prop:basic property of partial isometry}{\rm \cite[Lemma 2.1]{Xu-Fang}}\ Let $U\in\mathcal{L}(H,K)$  be a partial isometry. Then $U^*$ is also a partial isometry which satisfies $UU^*U=U$.
\end{proposition}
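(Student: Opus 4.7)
The goal is to establish two facts: $UU^*U = U$ and $UU^*$ is a projection (so that $U^*$ qualifies as a partial isometry). The plan is to derive the first identity by a direct $C^*$-algebraic computation, and then obtain the second as a short formal consequence.

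The setup: writing $P = U^*U$, the hypothesis is precisely that $P$ is a projection in $\mathcal{L}(H)$, i.e.\ $P^* = P$ and $P^2 = P$. I would consider the operator $T := U - UU^*U \in \mathcal{L}(H,K)$ and aim to show $T = 0$ by proving $T^*T = 0$ and invoking the $C^*$-identity $\|T\|^2 = \|T^*T\|$, which is available because $\mathcal{L}(H)$ is a $C^*$-algebra.

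The key step is expanding
\begin{equation*}
T^*T = (U^* - U^*UU^*)(U - UU^*U),
\end{equation*}
multiplying out the four resulting terms, and recognizing each as a power of $P$. Concretely, $U^*UU^*U = P^2$ and $U^*UU^*UU^*U = P^3$, so
\begin{equation*}
T^*T = P - P^2 - P^2 + P^3 = P - P - P + P = 0,
\end{equation*}
using $P^2 = P^3 = P$. Hence $T = 0$, giving $UU^*U = U$. Taking adjoints then yields $U^*UU^* = U^*$, which is the analogous identity for $U^*$.

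For the second conclusion, observe that $UU^*$ is automatically self-adjoint, and using $UU^*U = U$ one computes
\begin{equation*}
(UU^*)^2 = U(U^*UU^*) = UU^*,
\end{equation*}
so $UU^*$ is a projection in $\mathcal{L}(K)$. By the definition of a partial isometry applied to $U^*$, this shows $U^*$ is a partial isometry as well. There is no real obstacle here — the only point requiring care is to expand $T^*T$ and simplify via $P^2 = P$ in the correct order, since nothing in the $C^*$-module setting differs from the classical Hilbert space argument once the $C^*$-identity in $\mathcal{L}(H)$ is invoked.
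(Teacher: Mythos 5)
Your proof is correct. The paper itself gives no proof of this proposition --- it is quoted verbatim from \cite[Lemma~2.1]{Xu-Fang} --- so there is nothing internal to compare against; your argument is the standard one and it goes through in the Hilbert $C^*$-module setting. The only point worth tightening is your justification of the $C^*$-identity: $T=U-UU^*U$ lies in $\mathcal{L}(H,K)$, not in the $C^*$-algebra $\mathcal{L}(H)$, so you cannot literally appeal to ``$\mathcal{L}(H)$ is a $C^*$-algebra'' for $\|T\|^2=\|T^*T\|$. The identity does hold for adjointable operators between two Hilbert $C^*$-modules (e.g.\ by viewing $\mathcal{L}(H,K)$ as a corner of $\mathcal{L}(H\oplus K)$), but it is even quicker to avoid it: from $T^*T=0$ one gets $\|Tx\|^2=\|\langle Tx,Tx\rangle\|=\|\langle x,T^*Tx\rangle\|=0$ for every $x\in H$, hence $T=0$ directly. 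With that cosmetic repair the computation $T^*T=P-P^2-P^2+P^3=0$ and the subsequent verification that $UU^*$ is a projection are exactly what is needed.
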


\begin{lemma}\label{lem:Range closure of TT and T} {\rm\cite[Proposition 3.7]{Lance}}
Let $T\in\mathcal{L}(H,K)$. Then $\overline {\mathcal{R}(T^*T)}=\overline{ \mathcal{R}(T^*)}$ and $\overline {\mathcal{R}(TT^*)}=\overline{ \mathcal{R}(T)}$.
\end{lemma}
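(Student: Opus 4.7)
The plan is to establish the first equality, $\overline{\mathcal{R}(T^*T)}=\overline{\mathcal{R}(T^*)}$; the second then follows by applying it with $T^*\in\mathcal{L}(K,H)$ in place of $T$. The inclusion $\overline{\mathcal{R}(T^*T)}\subseteq\overline{\mathcal{R}(T^*)}$ is immediate from $T^*T=T^*\cdot T$; the content lies in the reverse inclusion, which I establish by constructing, for each $y\in K$, a sequence in $\mathcal{R}(T^*T)$ converging to $T^*y$.

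Since $T^*T\in\mathcal{L}(H)_+$, the element $T^*T+\frac{1}{n}I$ dominates $\frac{1}{n}I$ and is invertible in $\mathcal{L}(H)$; this is seen by applying the continuous functional calculus to $T^*T$ with $f(t)=(t+\frac{1}{n})^{-1}$ on $\mathrm{sp}(T^*T)\subseteq[0,\|T\|^2]$. Define
\[
x_n=\left(T^*T+\frac{1}{n}I\right)^{-1}T^*y\in H.
\]
The functional-calculus identity $T^*T\cdot(T^*T+\frac{1}{n}I)^{-1}=I-\frac{1}{n}(T^*T+\frac{1}{n}I)^{-1}$ yields
\[
T^*y-T^*Tx_n=\frac{1}{n}\left(T^*T+\frac{1}{n}I\right)^{-1}T^*y,
\]
so the task reduces to showing that the operator $A_n:=\frac{1}{n}(T^*T+\frac{1}{n}I)^{-1}T^*\in\mathcal{L}(K,H)$ has norm tending to zero.

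By the C*-identity, $\|A_n\|^2=\|A_nA_n^*\|$. Since $(T^*T+\frac{1}{n}I)^{-1}$ lies in the $C^*$-subalgebra of $\mathcal{L}(H)$ generated by $T^*T$, and therefore commutes with $T^*T$, a direct computation gives
\[
A_nA_n^*=\frac{1}{n^2}\left(T^*T+\frac{1}{n}I\right)^{-2}T^*T=\frac{1}{n^2}\,h_n(T^*T),
\]
where $h_n(t)=t/(t+\frac{1}{n})^2$. An elementary derivative check shows $h_n$ attains its maximum $n/4$ on $[0,\infty)$ at $t=\frac{1}{n}$; hence functional calculus yields $\|A_n\|^2\leq\frac{1}{4n}$, and therefore $\|T^*y-T^*Tx_n\|\leq\|A_n\|\cdot\|y\|\to 0$.

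The principal subtlety is that the naive estimate $\|(T^*T+\frac{1}{n}I)^{-1}\|\leq n$ combined with the factor $\frac{1}{n}$ yields only the useless bound $\|A_n\|\leq\|T^*\|$; one must exploit the joint cancellation between $\frac{1}{n}$ and $(T^*T+\frac{1}{n}I)^{-1}T^*T$ by grouping them under a single continuous function of $T^*T$ and then invoking the C*-identity. This approach is tailor-made for the Hilbert $C^*$-module setting, since it avoids any use of orthogonal complements of kernels, which may fail to exist in general.
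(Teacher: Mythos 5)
Your argument is correct. Note, however, that the paper does not prove this lemma at all: it is quoted verbatim from Lance's book as \cite[Proposition 3.7]{Lance}, so there is no internal proof to compare against. What you have written is a valid self-contained derivation: the inclusion $\overline{\mathcal{R}(T^*T)}\subseteq\overline{\mathcal{R}(T^*)}$ is trivial, and for the converse your approximants $x_n=(T^*T+\frac1n I)^{-1}T^*y$ give $T^*y-T^*Tx_n=\frac1n(T^*T+\frac1n I)^{-1}T^*y$, whose norm you control correctly via the $C^*$-identity and the functional-calculus bound $\sup_{t\ge 0} t/(t+\frac1n)^2=n/4$, yielding $\|T^*y-T^*Tx_n\|\le\frac{1}{2\sqrt{n}}\|y\|\to 0$. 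Your closing remark is also apt: the argument never invokes orthogonal complements or projections onto closed submodules, which is exactly what makes it work in an arbitrary Hilbert $C^*$-module. It is worth observing that your device is the same one the authors themselves deploy in Proposition~\ref{prop:SOT topology of the projection of the range-1}, where the operators $T_n=(\frac1n I+T)^{-1}T$ are shown to converge pointwise to $P_{\overline{\mathcal{R}(T)}}$ when that projection exists; your proof is in effect the ``projection-free'' version of that computation applied to $T^*T$ acting on the vector $T^*y$, and the key cancellation you isolate (grouping $\frac1n$ with the resolvent under one continuous function before taking norms) is precisely what makes the quantitative estimate nontrivial.
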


\begin{definition} For any $T\in\mathcal{L}(H,K)$, let $|T|$ denote the square root of $T^*T$. That is, $|T|=(T^*T)^\frac12$ and $|T^*|=(TT^*)^\frac12$.
\end{definition}

\begin{lemma}\label{lem:Wegge-Olsen}{\rm\cite[Proposition~15.3.7]{Wegge-Olsen}}\ Let $T\in\mathcal{L}(E)$. Then the following statements are equivalent:
\begin{enumerate}
\item[{\rm (i)}] $E=\mathcal{N}(|T|)\oplus \overline{\mathcal{R}(|T|)}$ and $E=\mathcal{N}(T^*)\oplus \overline{\mathcal{R}(T)}$;

\item[{\rm (ii)}] Both  $\overline{\mathcal{R}(T)}$
and $\overline{\mathcal{R}(|T|)}$  are orthogonally complemented;

\item[{\rm (iii)}]
$T$ has the polar decomposition $T=U|T|$, where $U\in\mathcal{L}(E)$ is a partial isometry such that
\begin{eqnarray}\begin{split}\label{eqn:four equalities of rangges and null spaces-1}\mathcal{N}(U)&=\mathcal{N}(T), \mathcal{N}(U^*)=\mathcal{N}(T^*),\\
\mathcal{R}(U)&=\overline{\mathcal{R}(T)}, \mathcal{R}(U^*)=\overline{\mathcal{R}(T^*)}.\end{split}
\end{eqnarray}
\end{enumerate}
\end{lemma}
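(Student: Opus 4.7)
The plan is to prove $(\mathrm{i})\Leftrightarrow(\mathrm{ii})$, then $(\mathrm{iii})\Rightarrow(\mathrm{ii})$, and finally the substantive implication $(\mathrm{ii})\Rightarrow(\mathrm{iii})$.

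For $(\mathrm{i})\Leftrightarrow(\mathrm{ii})$, I would invoke the general identity $\mathcal{N}(S^*)=\overline{\mathcal{R}(S)}^{\perp}$ available for every $S\in\mathcal{L}(E)$. Applied with $S=T$ it identifies $\mathcal{N}(T^*)$ with $\overline{\mathcal{R}(T)}^{\perp}$, and applied with the self-adjoint $S=|T|$ it identifies $\mathcal{N}(|T|)$ with $\overline{\mathcal{R}(|T|)}^{\perp}$. Under these identifications the two direct-sum decompositions in (i) are literally the assertion that $\overline{\mathcal{R}(T)}$ and $\overline{\mathcal{R}(|T|)}$ are orthogonally complemented, which is (ii).

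For $(\mathrm{iii})\Rightarrow(\mathrm{ii})$, I would exploit that $U^*U$ is by definition a projection. Proposition~\ref{prop:basic property of partial isometry} gives $UU^*U=U$, hence $U^*UU^*=U^*$, so $\mathcal{R}(U^*)=\mathcal{R}(U^*U)$ coincides with the range of a projection and is therefore closed and orthogonally complemented. Combining the hypothesis $\mathcal{R}(U^*)=\overline{\mathcal{R}(T^*)}$ with Lemma~\ref{lem:Range closure of TT and T} and Proposition~\ref{prop:Range Closure of T alpha and T} (with $\alpha=\tfrac12$) yields $\mathcal{R}(U^*)=\overline{\mathcal{R}(|T|)}$, so $\overline{\mathcal{R}(|T|)}$ is orthogonally complemented. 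The same argument applied to the partial isometry $U^*$ shows that $\mathcal{R}(U)=\overline{\mathcal{R}(T)}$ is orthogonally complemented.

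For $(\mathrm{ii})\Rightarrow(\mathrm{iii})$, the starting point is the $\mathfrak{A}$-valued isometry identity
\begin{equation*}
\langle |T|x,|T|y\rangle=\langle T^*Tx,y\rangle=\langle Tx,Ty\rangle\qquad(x,y\in E),
\end{equation*}
which shows that the prescription $|T|x\mapsto Tx$ is well-defined, preserves the $\mathfrak{A}$-valued inner product (hence the norm) on $\mathcal{R}(|T|)$, and extends by continuity to $\overline{\mathcal{R}(|T|)}$. I would then extend it by zero on $\mathcal{N}(|T|)$, assembling an operator $U$ on $E$ via the decomposition $E=\overline{\mathcal{R}(|T|)}\oplus\mathcal{N}(|T|)$ supplied by (ii). To verify $U\in\mathcal{L}(E)$ I would carry out the mirror construction on the $\overline{\mathcal{R}(T)}\oplus\mathcal{N}(T^*)$ side, producing $V$ with $V(Tx)=|T|x$ and $V\equiv 0$ on $\mathcal{N}(T^*)$, and then check $\langle Ux,y\rangle=\langle x,Vy\rangle$ directly from the same inner-product identity; this identifies $U^*=V$. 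Once $U$ and $V$ are in hand, $T=U|T|$ holds by construction, $U^*U=P_{\overline{\mathcal{R}(|T|)}}$ exhibits $U$ as a partial isometry, and the four equalities in \eqref{eqn:four equalities of rangges and null spaces-1} are read off from the construction together with $\mathcal{N}(|T|)=\mathcal{N}(T)$ (immediate from the isometry identity) and $\overline{\mathcal{R}(|T|)}=\overline{\mathcal{R}(T^*)}$ (Lemma~\ref{lem:Range closure of TT and T} and Proposition~\ref{prop:Range Closure of T alpha and T}).

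The main obstacle is the adjointability of the constructed $U$ in the Hilbert $C^*$-module setting, since boundedness of a module homomorphism does not in general imply adjointability; the genuine work therefore lies in producing the candidate adjoint $V$ from the orthogonal-complementability of $\overline{\mathcal{R}(T)}$ and verifying the pairing identity. Every other step—the partial isometry property, the factorization $T=U|T|$, and the four range/kernel equalities—is then a routine bookkeeping exercise.
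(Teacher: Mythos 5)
The paper offers no proof of this lemma at all: it is imported verbatim as \cite[Proposition~15.3.7]{Wegge-Olsen}, so there is nothing internal to compare against. Your argument is correct and is, in substance, the standard proof from that source. The two easy directions are handled properly: $\mathcal{N}(S^*)=\overline{\mathcal{R}(S)}^{\perp}$ turns (i) into (ii) verbatim, and $\mathcal{R}(U^*)=\mathcal{R}(U^*U)$ (via $UU^*U=U$) makes (iii)$\Rightarrow$(ii) immediate. For (ii)$\Rightarrow$(iii) you have correctly located the one genuinely module-theoretic difficulty --- that the densely defined isometry $|T|x\mapsto Tx$ extends to a \emph{bounded} module map for free but not to an \emph{adjointable} one --- and your remedy is the right one: the orthogonal complementability of $\overline{\mathcal{R}(T)}$ is exactly what licenses the mirror construction of $V$ with $V(Tx)=|T|x$, $V|_{\mathcal{N}(T^*)}=0$, and the pairing $\langle Ux,y\rangle=\langle x,Vy\rangle$ then follows on the dense submodules from $\langle |T|a,|T|b\rangle=\langle Ta,Tb\rangle$ together with $\langle Ta,y_0\rangle=\langle a,T^*y_0\rangle=0$ for $y_0\in\mathcal{N}(T^*)$. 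From there $U^*U=P_{\overline{\mathcal{R}(|T|)}}$, $UU^*=P_{\overline{\mathcal{R}(T)}}$, and the four kernel/range identities are the routine bookkeeping you describe. The one point worth writing out explicitly in a final version is the identification $\mathcal{N}(|T|)=\overline{\mathcal{R}(|T|)}^{\perp}$ (using $\langle x,|T|x\rangle=\| |T|^{1/2}x\|$-type positivity), since it is used both to assemble $U$ on all of $E$ and to pass between (i) and (ii); everything else is sound.
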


\begin{lemma}\label{lem:one plus one implies one plus four} Let $T\in\mathcal{L}(H,K)$ be such that $\overline{\mathcal{R}(T^*)}$ is orthogonally complemented, and let $U\in\mathcal{L}(H,K)$ be a partial isometry such that
\begin{equation}\label{equ:defn of polar decomposition for T-1}T=U|T| \ \mbox{and}\  U^*U=P_{\overline{\mathcal{R}(T^*)}}.
\end{equation}
Then $\overline{\mathcal{R}(T)}$ is also orthogonally complemented, and all equations in \eqref{eqn:four equalities of rangges and null spaces-1} are satisfied. Furthermore, the following equations are also valid:
\begin{eqnarray}\label{equ:the polar decomposition of T star-pre stage}&&T^*=U^*|T^*|\ \mbox{and}\ UU^*=P_{\overline{\mathcal{R}(T)}},\\
\label{eqn:key relationship between two 1/2}&&|T^*|=U|T| U^*\ \mbox{and}\ U|T|=|T^*|U.
\end{eqnarray}
\end{lemma}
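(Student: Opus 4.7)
The plan is to derive everything from two facts about $U$: it is a partial isometry with $U^*U=P_{\overline{\mathcal{R}(T^*)}}$, and (by Proposition~\ref{prop:basic property of partial isometry}) $U^*$ is also a partial isometry, so $UU^*U=U$ and $UU^*$ is a projection on $K$. Combining these gives $\mathcal{R}(U^*)=\mathcal{R}(U^*U)=\overline{\mathcal{R}(T^*)}$ immediately, and $\mathcal{N}(U)=\mathcal{N}(U^*U)=\mathcal{N}(P_{\overline{\mathcal{R}(T^*)}})=\overline{\mathcal{R}(T^*)}^{\bot}=\mathcal{N}(T)$, using the standard identity $\mathcal{R}(T^*)^{\bot}=\mathcal{N}(T)$.

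Next I would establish $\mathcal{R}(U)=\overline{\mathcal{R}(T)}$, which will simultaneously imply that $\overline{\mathcal{R}(T)}$ is orthogonally complemented and that $UU^*=P_{\overline{\mathcal{R}(T)}}$. The key observation is that, by Lemma~\ref{lem:Range closure of TT and T} and Proposition~\ref{prop:Range Closure of T alpha and T},
\begin{equation*}
\overline{\mathcal{R}(|T|)}=\overline{\mathcal{R}((T^*T)^{1/2})}=\overline{\mathcal{R}(T^*T)}=\overline{\mathcal{R}(T^*)}=\overline{\mathcal{R}(U^*U)}.
\end{equation*}
Applying Proposition~\ref{prop:rang characterization-1} with $A=U$, $B=|T|$, $C=U^*U$ yields $\overline{\mathcal{R}(U|T|)}=\overline{\mathcal{R}(UU^*U)}$, i.e.\ $\overline{\mathcal{R}(T)}=\overline{\mathcal{R}(U)}$. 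Since $U=UU^*U$, one has $\mathcal{R}(U)\subseteq\mathcal{R}(UU^*)\subseteq\mathcal{R}(U)$, so $\mathcal{R}(U)=\mathcal{R}(UU^*)$ is closed (range of a projection). Hence $\mathcal{R}(U)=\overline{\mathcal{R}(T)}$, $\overline{\mathcal{R}(T)}$ is orthogonally complemented, and $UU^*=P_{\overline{\mathcal{R}(T)}}$. From this last identity, $\mathcal{N}(U^*)=\mathcal{N}(UU^*)=\overline{\mathcal{R}(T)}^{\bot}=\mathcal{N}(T^*)$, completing \eqref{eqn:four equalities of rangges and null spaces-1} and \eqref{equ:the polar decomposition of T star-pre stage} (for $UU^*$).

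For the formulas in \eqref{eqn:key relationship between two 1/2}, the main computation is showing $|T^*|=U|T|U^*$. The operator $A:=U|T|U^*$ is self-adjoint and positive (since $|T|\ge 0$), and its square is
\begin{equation*}
A^2=U|T|(U^*U)|T|U^*=U|T|\,P_{\overline{\mathcal{R}(T^*)}}\,|T|U^*=U|T|^2U^*=U(T^*T)U^*=(U|T|)(|T|U^*)=TT^*,
\end{equation*}
where I used that $P_{\overline{\mathcal{R}(T^*)}}|T|=|T|$ because $\mathcal{R}(|T|)\subseteq\overline{\mathcal{R}(T^*)}$. Uniqueness of the positive square root gives $A=|T^*|$. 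Taking adjoints in $T=U|T|$ yields $T^*=|T|U^*$, whence $U^*|T^*|=U^*U|T|U^*=P_{\overline{\mathcal{R}(T^*)}}|T|U^*=|T|U^*=T^*$, proving the first part of \eqref{equ:the polar decomposition of T star-pre stage}. Finally, $|T^*|U=U|T|U^*U=U|T|P_{\overline{\mathcal{R}(T^*)}}$, and self-adjointness gives $|T|P_{\overline{\mathcal{R}(T^*)}}=(P_{\overline{\mathcal{R}(T^*)}}|T|)^*=|T|^*=|T|$, so $|T^*|U=U|T|$.

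The proof is essentially a sequence of short bookkeeping steps; there is no single hard obstacle. The place that requires the most care is the range identity $\mathcal{R}(U)=\overline{\mathcal{R}(T)}$, where one must combine Propositions~\ref{prop:rang characterization-1}, \ref{prop:Range Closure of T alpha and T} and Lemma~\ref{lem:Range closure of TT and T} with the partial isometry identity $UU^*U=U$ to conclude that $\mathcal{R}(U)$ is both dense in $\overline{\mathcal{R}(T)}$ and closed. Everything else—the null space identities, the polar decomposition of $T^*$, and the intertwining relations—falls out once $U^*U$ and $UU^*$ have been identified as the projections onto $\overline{\mathcal{R}(T^*)}$ and $\overline{\mathcal{R}(T)}$ respectively.
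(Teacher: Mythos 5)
Your proof is correct and follows essentially the same route as the paper's: identify $\overline{\mathcal{R}(|T|)}$ with $\mathcal{R}(U^*U)$ via Lemma~\ref{lem:Range closure of TT and T} and Proposition~\ref{prop:Range Closure of T alpha and T}, apply Proposition~\ref{prop:rang characterization-1} together with $UU^*U=U$ to get $UU^*=P_{\overline{\mathcal{R}(T)}}$, compute $TT^*=(U|T|U^*)^2$ to obtain $|T^*|=U|T|U^*$, and deduce the remaining identities by short algebraic manipulations. The only differences are cosmetic (you spell out the null-space and range equalities of \eqref{eqn:four equalities of rangges and null spaces-1} that the paper leaves implicit, and you verify $U|T|=|T^*|U$ by a direct computation rather than by taking adjoints of $T^*=U^*|T^*|$).
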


\begin{proof} By  Proposition~\ref{prop:Range Closure of T alpha and T} and  Lemma~\ref{lem:Range closure of TT and T}, we have
\begin{equation}\label{equ:the closures of one half-1}\overline{\mathcal{R}(|T|)}=\overline{\mathcal{R}(T^*T)}=\overline{\mathcal{R}(T^*)}=\mathcal{R}(U^*U),\end{equation}
which gives  by  Proposition~\ref{prop:rang characterization-1} that
$$\overline{\mathcal{R}(T)}=\overline{\mathcal{R}(U|T|)}=\overline{\mathcal{R}(UU^*U)}=\mathcal{R}(UU^*),$$
hence $\overline{\mathcal{R}(T)}$ is orthogonally complemented such that the second equation in \eqref{equ:the polar decomposition of T star-pre stage}
is satisfied. Furthermore,
$$TT^*=U|T|\cdot |T|U^*=(U|T|U^*)^2,$$
hence the first equation in \eqref{eqn:key relationship between two 1/2} is satisfied. As a result,
\begin{eqnarray*}&&U^*|T^*|=(U^*U|T|)U^*=|T|U^*=(U|T|)^*=T^*,\\
&&U|T|=T=(T^*)^*=(U^*|T^*|)^*=|T^*|U.
\end{eqnarray*}
This completes the proof of \eqref{equ:the polar decomposition of T star-pre stage} and \eqref{eqn:key relationship between two 1/2}. Finally, equations stated in \eqref{eqn:four equalities of rangges and null spaces-1} can be derived directly from the second equations in \eqref{equ:defn of polar decomposition for T-1} and \eqref{equ:the polar decomposition of T star-pre stage}, respectively.
\end{proof}

\begin{lemma}\label{lem:xu and fang-LAA}{\rm\cite[Theorem 3.1]{Xu-Fang}}\ Let $T\in \mathcal{L}(H,K)$ be such that $\overline{\mathcal{R}(T^*)}$ is orthogonally complemented. If $\mathcal{R}(|T^*|)\subseteq\mathcal{R}(T)$, then the following statements are valid:
\begin{enumerate}
\item[{\rm (i)}] $\mathcal{R}(|T^*|)=\mathcal{R}(T)$;

\item[{\rm (ii)}] $\mathcal{R}(|T|)=\mathcal{R}(T^*)$;

\item[{\rm (iii)}] $\overline{\mathcal{R}(T)}\,  \mbox{is orthogonally complemented}$.

\end{enumerate}
\end{lemma}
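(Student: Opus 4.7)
The plan is to reduce the claim to Lemma~\ref{lem:one plus one implies one plus four} by producing a partial isometry $U\in\mathcal{L}(H,K)$ with $T=U|T|$ and $U^*U=P_{\overline{\mathcal{R}(T^*)}}$. Granted such a $U$, that lemma immediately delivers (iii) together with the identities $U|T|=|T^*|U$ and (by adjoining) $|T|U^*=U^*|T^*|$. Rewriting $T=|T^*|U$ gives $\mathcal{R}(T)\subseteq\mathcal{R}(|T^*|)$, which combined with the hypothesis proves (i); rewriting $T^*=|T|U^*$ gives $\mathcal{R}(T^*)\subseteq\mathcal{R}(|T|)$. For the reverse inclusion in (ii), I would note $U^*T=U^*U|T|=P_{\overline{\mathcal{R}(T^*)}}|T|=|T|$, so for any $x\in H$, using (i) to write $Tx=|T^*|y$, one computes $|T|x=U^*Tx=U^*|T^*|y=T^*y\in\mathcal{R}(T^*)$.

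The construction of $U$ begins with the decomposition $H=\overline{\mathcal{R}(T^*)}\dotplus\mathcal{N}(T)$ afforded by the orthogonal complementedness of $\overline{\mathcal{R}(T^*)}$ together with the always-valid equality $\mathcal{N}(T)=\overline{\mathcal{R}(T^*)}^\perp$. For each $x\in K$ the hypothesis produces $y\in H$ with $Ty=|T^*|x$; any two such $y$ differ by a vector in $\mathcal{N}(T)$, so exactly one, call it $Wx$, lies in $\overline{\mathcal{R}(T^*)}$. Uniqueness makes $W:K\to H$ both $\mathbb{C}$-linear and $\mathfrak{A}$-linear. The main obstacle I anticipate is adjointability of $W$. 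To address it I would define a candidate $V:H\to K$ on $\mathcal{R}(T^*)$ by $V(T^*z):=|T^*|z$; this is well defined and isometric because $\|T^*z\|^2=\langle z,TT^*z\rangle=\||T^*|z\|^2$, so it extends by continuity to $\overline{\mathcal{R}(T^*)}$ and then by zero on $\mathcal{N}(T)$, producing a bounded module map on $H$. The adjointness identity $\langle Wx,h\rangle=\langle x,Vh\rangle$ holds on $h=T^*z$ (both sides equal $\langle TWx,z\rangle=\langle|T^*|x,z\rangle=\langle x,|T^*|z\rangle$) and on $h\in\mathcal{N}(T)$ (both sides vanish because $Wx\in\overline{\mathcal{R}(T^*)}$); density and linearity extend it to all of $H$, identifying $V$ as $W^*$ and hence placing $W$ in $\mathcal{L}(K,H)$.

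Setting $U:=V$, so that $U^*=W$, the remaining identities $T=U|T|$ and $U^*U=WW^*=P_{\overline{\mathcal{R}(T^*)}}$ are each established by showing that two vectors in $\overline{\mathcal{R}(T^*)}$ differ by a vector in $\mathcal{N}(T)$ and so coincide. For $T=U|T|$, I would take adjoints and verify $|T|W=T^*$: applying $T$ gives $T|T|Wy=|T^*|TWy=|T^*|^2y=TT^*y$, where the crucial identity $|T^*|T=T|T|$ comes from continuous functional calculus applied to the polynomial identity $(TT^*)^nT=T(T^*T)^n$. For $U^*U=WW^*=P_{\overline{\mathcal{R}(T^*)}}$, on $h=T^*z$ one has $WW^*(T^*z)=W(|T^*|z)$, and applying $T$ yields $TW(|T^*|z)=|T^*|^2z=TT^*z$, so $W(|T^*|z)=T^*z$; on $h\in\mathcal{N}(T)$, $W^*h=0$ by construction. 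Once both identities of \eqref{equ:defn of polar decomposition for T-1} are in force, Lemma~\ref{lem:one plus one implies one plus four} applies and (i)--(iii) follow as outlined in the first paragraph.
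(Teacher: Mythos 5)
The paper gives no proof of this lemma at all: it is imported verbatim as \cite[Theorem 3.1]{Xu-Fang}, so there is no internal argument to compare against. What you have written is therefore necessarily a different route --- a self-contained proof --- and, after checking it line by line, I believe it is correct. Your strategy is to manufacture the partial isometry $U$ of \eqref{equ:defn of polar decomposition for T-1} directly from the range inclusion $\mathcal{R}(|T^*|)\subseteq\mathcal{R}(T)$ and then let Lemma~\ref{lem:one plus one implies one plus four} do the rest. The delicate points all check out: $W$ is well defined because $\mathcal{N}(T)=\overline{\mathcal{R}(T^*)}^{\perp}$ meets $\overline{\mathcal{R}(T^*)}$ trivially; $V$ is isometric on $\mathcal{R}(T^*)$ via $\langle T^*z,T^*z\rangle=\langle |T^*|z,|T^*|z\rangle$, so it extends and the pairing identity makes $V$ adjointable with $V^*=W$ (adjointability then gives boundedness and module-linearity for free); and the identities $T=U|T|$ and $U^*U=P_{\overline{\mathcal{R}(T^*)}}$ follow from the ``two elements of $\overline{\mathcal{R}(T^*)}$ with the same image under $T$ coincide'' device, where you correctly rely on $\overline{\mathcal{R}(|T|)}=\overline{\mathcal{R}(T^*)}$ (Lemma~\ref{lem:Range closure of TT and T} plus Proposition~\ref{prop:Range Closure of T alpha and T}) to place $|T|Wy$ in $\overline{\mathcal{R}(T^*)}$. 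The intertwining $T|T|=|T^*|T$ via polynomial approximation is standard and correctly invoked. One remark on economy: once you have \eqref{equ:defn of polar decomposition for T-1}, parts (i) and (ii) are exactly the content of the implication (ii)$\Longrightarrow$(iii) already proved in Theorem~\ref{thm:existence condition for polar decomposition}, so your first paragraph partially re-derives that computation; there is no circularity, since Lemma~\ref{lem:one plus one implies one plus four} does not depend on the present lemma, but you could shorten by citing it. What your approach buys is independence from the external reference \cite{Xu-Fang}; what the paper's citation buys is brevity and the full strength of the Xu--Fang result, which is stated there for general majorization rather than the special case $|T^*|$ versus $T$.
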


\begin{theorem}\label{thm:existence condition for polar decomposition}  Let $T\in \mathcal{L}(H,K)$. Then the following statements are equivalent:
\begin{enumerate}
\item[{\rm (i)}] $\overline{\mathcal{R}(T)}$ and $\overline{\mathcal{R}(T^*)}$ are both orthogonally complemented;
\item[{\rm (ii)}]  $\overline{\mathcal{R}(T^*)}$ is orthogonally complemented and \eqref{equ:defn of polar decomposition for T-1} is satisfied for some partial isometry $U\in\mathcal{L}(H,K)$;
\item[{\rm (iii)}]  $\overline{\mathcal{R}(T^*)}$ is orthogonally complemented,  $\mathcal{R}(|T|)=\mathcal{R}(T^*)$ and $\mathcal{R}(|T^*|)=\mathcal{R}(T)$.
\end{enumerate}
\end{theorem}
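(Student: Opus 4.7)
The plan is to prove the cycle (i) $\Rightarrow$ (ii) $\Rightarrow$ (iii) $\Rightarrow$ (i). Two of the closing steps are essentially free: (ii) $\Rightarrow$ (i) is already contained in Lemma~\ref{lem:one plus one implies one plus four}, which derives orthogonal complementation of $\overline{\mathcal{R}(T)}$ from the existence of a partial isometry $U$ satisfying \eqref{equ:defn of polar decomposition for T-1}; and (iii) $\Rightarrow$ (i) follows at once from Lemma~\ref{lem:xu and fang-LAA}(iii), since (iii) in particular supplies $\mathcal{R}(|T^*|)\subseteq\mathcal{R}(T)$. Thus the substantive content lies in (i) $\Rightarrow$ (ii) and (ii) $\Rightarrow$ (iii).

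The main obstacle is (i) $\Rightarrow$ (ii), since Wegge--Olsen's Lemma~\ref{lem:Wegge-Olsen} is formulated only for $T\in\mathcal{L}(E)$, whereas here $T\in\mathcal{L}(H,K)$. I bypass this by a block-matrix lift: set $E=H\oplus K$ and define $\tilde T\in\mathcal{L}(E)$ by $\tilde T(x,y)=(0,Tx)$, so that $\tilde T^*(x,y)=(T^*y,0)$ and $|\tilde T|=\bigl(\begin{smallmatrix}|T|&0\\0&0\end{smallmatrix}\bigr)$. Then $\overline{\mathcal{R}(\tilde T)}=0\oplus\overline{\mathcal{R}(T)}$ and $\overline{\mathcal{R}(|\tilde T|)}=\overline{\mathcal{R}(T^*)}\oplus 0$, so hypothesis (i) makes both orthogonally complemented in $E$. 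Lemma~\ref{lem:Wegge-Olsen} then provides a partial isometry $\tilde U\in\mathcal{L}(E)$ with $\tilde T=\tilde U|\tilde T|$ and the four range/null-space equalities \eqref{eqn:four equalities of rangges and null spaces-1}. Those equalities force $\tilde U$ to be strictly off-diagonal, $\tilde U=\bigl(\begin{smallmatrix}0&0\\U&0\end{smallmatrix}\bigr)$ for a unique $U\in\mathcal{L}(H,K)$; reading off the nontrivial block of $\tilde T=\tilde U|\tilde T|$ and of $\tilde U^{*}\tilde U=P_{\overline{\mathcal{R}(\tilde T^{*})}}$ yields precisely $T=U|T|$ and $U^{*}U=P_{\overline{\mathcal{R}(T^*)}}$, i.e.\ (ii).

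For (ii) $\Rightarrow$ (iii), the identities $T^*=|T|U^*$ and $|T^*|=U|T|U^*$ supplied by Lemma~\ref{lem:one plus one implies one plus four} do most of the work. The inclusion $\mathcal{R}(T^*)\subseteq\mathcal{R}(|T|)$ is immediate from the first identity. For the reverse inclusion, use the decomposition $H=\overline{\mathcal{R}(T^*)}\oplus\mathcal{N}(T)$ (valid by orthogonal complementation of $\overline{\mathcal{R}(T^*)}$ together with $\overline{\mathcal{R}(T^*)}^{\bot}=\mathcal{N}(T)$), together with $U^{*}U=P_{\overline{\mathcal{R}(T^*)}}$ and $\mathcal{N}(|T|)=\mathcal{N}(T)$, to compute for any $x\in H$
\[
|T|x=|T|(U^{*}U)x=(|T|U^{*})(Ux)=T^{*}(Ux)\in\mathcal{R}(T^{*}),
\]
where the first equality comes from $|T|(I-U^{*}U)x\in|T|\mathcal{N}(|T|)=\{0\}$. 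A symmetric argument on the $K$-side, using $K=\overline{\mathcal{R}(T)}\oplus\mathcal{N}(T^{*})$, $UU^{*}=P_{\overline{\mathcal{R}(T)}}$ (from \eqref{equ:the polar decomposition of T star-pre stage}) and $|T^{*}|=U|T|U^{*}$, yields $\mathcal{R}(|T^{*}|)=\mathcal{R}(T)$ and completes the cycle.
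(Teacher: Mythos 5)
Your proposal is correct and follows essentially the same route as the paper: the same block-matrix lift $\widetilde{T}=\bigl(\begin{smallmatrix}0&0\\T&0\end{smallmatrix}\bigr)$ on $E=H\oplus K$ to import Lemma~\ref{lem:Wegge-Olsen} for (i)$\Rightarrow$(ii), the same appeals to Lemmas~\ref{lem:one plus one implies one plus four} and \ref{lem:xu and fang-LAA} for the return implications, and for (ii)$\Rightarrow$(iii) your computation $|T|=|T|U^*U=T^*(Ux)$ is just a pointwise rephrasing of the paper's identity $T^*U=|T|$.
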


\begin{proof} The implications of (ii)$\Longrightarrow$(i) and (iii)$\Longrightarrow$(i) follow from Lemmas~\ref{lem:one plus one implies one plus four} and \ref{lem:xu and fang-LAA}, respectively.

$``\mbox{(i)}\Longrightarrow\mbox{(ii)}"$: Let $E=H\oplus K$ and $\widetilde{T}=\left(
                                                     \begin{array}{cc}
                                                       0 & 0 \\
                                                       T & 0 \\
                                                     \end{array}
                                                   \right)\in\mathcal{L}(E)$. Then
both $\overline{\mathcal{R}(\widetilde{T})}$ and $\overline{\mathcal{R}(\widetilde{T}^*)}$ are orthogonally complemented, hence
by Lemma~\ref{lem:Wegge-Olsen} there exists a partial isometry $\widetilde{U}=\left(
                                                                                \begin{array}{cc}
                                                                                  U_{11} & U_{12} \\
                                                                                  U & U_{22} \\
                                                                                \end{array}
                                                                              \right)
\in\mathcal{L}(E)$ such that
\begin{equation*}\widetilde{T}=\widetilde{U}|\widetilde{T}|, \mathcal{R}(\widetilde{U})=\overline{\mathcal{R}(\widetilde{T})}=\{0\}\oplus \overline{\mathcal{R}(T)}\ \mbox{and}\ \mathcal{R}(\widetilde{U}^*)=\overline{\mathcal{R}(\widetilde{T}^*)}=\overline{\mathcal{R}(T^*)}\oplus \{0\},
\end{equation*}
which leads to  $\widetilde{U}=\left(
                                 \begin{array}{cc}
                                   0 & 0 \\
                                   U & 0 \\
                                 \end{array}
                               \right)$, hence $U$ is a partial isometry satisfying  \eqref{equ:defn of polar decomposition for T-1}.

$``\mbox{(ii)}\Longrightarrow\mbox{(iii)}"$: By \eqref{equ:defn of polar decomposition for T-1}--\eqref{eqn:key relationship between two 1/2}, we have
\begin{equation*}T^*=(U|T|)^*=|T|U^*\ \mbox{and}\ T^*U=U^*|T^*|\cdot U=U^*\cdot U|T|U^*\cdot U=|T|,
\end{equation*}
which obviously lead to $\mathcal{R}(|T|)=\mathcal{R}(T^*)$. Replacing $T,U$ with $T^*, U^*$,
we obtain $\mathcal{R}(|T^*|)=\mathcal{R}(T)$.
\end{proof}

\begin{lemma}\label{lem:uniqueness of polar decomposition-1} Let $T\in \mathcal{L}(H,K)$ be such that $\overline{\mathcal{R}(T^*)}$ is orthogonally complemented. If $U,V\in \mathcal{L}(H,K)$ are given such that $U|T|=V|T|$ and $U^*U=V^*V=P_{\overline{\mathcal{R}(T^*)}}$, then $U=V$.
\end{lemma}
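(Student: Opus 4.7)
The plan is to set $W = U - V$ and show $W = 0$ by combining two observations: $W$ annihilates the closed submodule $\overline{\mathcal{R}(|T|)}$, and both $U$ and $V$ factor through the projection onto that submodule.

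First I would note that $U|T| = V|T|$ gives $W|T| = 0$, so $\mathcal{R}(|T|) \subseteq \mathcal{N}(W)$. Because $W$ is bounded, $\mathcal{N}(W)$ is closed, so in fact $\overline{\mathcal{R}(|T|)} \subseteq \mathcal{N}(W)$. Applying Proposition~\ref{prop:Range Closure of T alpha and T} with $\alpha = \tfrac12$ and then Lemma~\ref{lem:Range closure of TT and T} gives
\begin{equation*}
\overline{\mathcal{R}(|T|)} = \overline{\mathcal{R}(T^*T)} = \overline{\mathcal{R}(T^*)},
\end{equation*}
so $W$ vanishes on $\overline{\mathcal{R}(T^*)}$. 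Setting $P = P_{\overline{\mathcal{R}(T^*)}}$, this is exactly the identity $WP = 0$.

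Next I would use the partial isometry hypothesis to factor $U$ and $V$ through $P$. Since $U^*U = P$ and, by Proposition~\ref{prop:basic property of partial isometry}, $UU^*U = U$, we have
\begin{equation*}
UP = U(U^*U) = UU^*U = U,
\end{equation*}
and similarly $VP = V$. Therefore
\begin{equation*}
U - V = UP - VP = WP = 0,
\end{equation*}
completing the proof.

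I don't anticipate any real obstacle here; the argument is essentially a two-line chain once one identifies the correct projection. The only point requiring a small amount of care is recognizing that $\overline{\mathcal{R}(|T|)} = \overline{\mathcal{R}(T^*)}$, which is precisely what makes the hypothesis $U^*U = V^*V = P_{\overline{\mathcal{R}(T^*)}}$ match up with the kernel inclusion $\overline{\mathcal{R}(|T|)} \subseteq \mathcal{N}(U-V)$, and this is already handled by results available in the previous section.
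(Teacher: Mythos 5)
Your proposal is correct and follows essentially the same route as the paper: both deduce from $U|T|=V|T|$ that $U-V$ vanishes on $\overline{\mathcal{R}(|T|)}=\overline{\mathcal{R}(T^*)}$ (the identity recorded in \eqref{equ:the closures of one half-1}), and then conclude via $U=U(U^*U)=UP_{\overline{\mathcal{R}(T^*)}}$ and likewise for $V$. Your explicit appeal to Proposition~\ref{prop:basic property of partial isometry} for $UU^*U=U$ just makes visible a step the paper leaves implicit.
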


\begin{proof} The equation $U|T|=V|T|$ together with \eqref{equ:the closures of one half-1} yields  $UP_{\overline{\mathcal{R}(T^*)}}=VP_{\overline{\mathcal{R}(T^*)}}$, hence
$$U=U(U^*U)=UP_{\overline{\mathcal{R}(T^*)}}=VP_{\overline{\mathcal{R}(T^*)}}=V(V^*V)=V.\qedhere$$
\end{proof}

\begin{definition}\label{defn:defn of polar decomposition} The polar decomposition of $T\in\mathcal{L}(H,K)$ can be characterized as
\begin{equation}\label{equ:two conditions of polar decomposition}T=U|T|\ \mbox{and}\ U^*U=P_{\overline{\mathcal{R}(T^*)}},\end{equation}
where $U\in\mathcal{L}(H,K)$ is a partial isometry.
\end{definition}

\begin{remark}\label{rem:existence of polar decomposition} It follows from Theorem~\ref{thm:existence condition for polar decomposition} and Lemma~\ref{lem:uniqueness of polar decomposition-1}  that $T\in\mathcal{L}(H,K)$ has the (unique) polar decomposition if and only if  $\overline{\mathcal{R}(T^*)}$ and $\overline{\mathcal{R}(T)}$ are both orthogonally complemented. In this case, $T^*=U^*|T^*|$ is the polar decomposition of $T^*$.
\end{remark}

A slight generalization of \eqref{eqn:key relationship between two 1/2} is as follows:

\begin{lemma}\label{lem:relationship between |T| and |T-star| alpha} Let $T=U|T|$ be the polar decomposition of  $T\in \mathcal{L}(H,K)$. Then for any $\alpha>0$, the following statements are valid:
\begin{enumerate}
\item[{\rm (i)}] $U|T|^\alpha U^*=(U|T|U^*)^\alpha=|T^*|^\alpha$;
\item[{\rm (ii)}] $U|T|^\alpha=|T^*|^\alpha U$;
\item[{\rm (iii)}] $U^*|T^*|^\alpha U=(U^*|T^*|U)^\alpha=|T|^\alpha$.
\end{enumerate}
\end{lemma}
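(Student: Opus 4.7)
The plan is to establish (i) by a functional-calculus argument, and then to deduce (ii) and (iii) algebraically.

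The crucial algebraic fact is that $U^*U = P_{\overline{\mathcal{R}(T^*)}}$ is absorbed by $|T|$: since $\overline{\mathcal{R}(|T|)} = \overline{\mathcal{R}(T^*)}$ (by Proposition~\ref{prop:Range Closure of T alpha and T} with exponent $1/2$ together with Lemma~\ref{lem:Range closure of TT and T}) and $\mathcal{R}(|T|^n)\subseteq\overline{\mathcal{R}(|T|)}$ for every $n\ge 1$, one has $U^*U\cdot|T|^n = |T|^n$. A short induction therefore yields
\[
(U|T|U^*)^n = U|T|^n U^*\qquad (n\ge 1),
\]
because each adjacent pair $U^*U$ in the expanded product collapses to $P_{\overline{\mathcal{R}(T^*)}}$ and is swallowed by the neighboring power of $|T|$. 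By linearity, $p(U|T|U^*) = U\,p(|T|)\,U^*$ for every polynomial $p$ with $p(0)=0$.

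For $\alpha>0$ the function $f(t)=t^\alpha$ is continuous on $[0,\|T\|]$ with $f(0)=0$. Approximate $f$ uniformly on this interval by polynomials $p_n$ satisfying $p_n(0)=0$: pick any polynomial sequence $q_n\to f$ uniformly and set $p_n(t)=q_n(t)-q_n(0)$, noting that $|q_n(0)|\le\|q_n-f\|_\infty\to 0$, so $p_n\to f$ uniformly as well. The continuous functional calculus in the $C^*$-algebras $\mathcal{L}(H)$ and $\mathcal{L}(K)$ then gives $p_n(|T|)\to|T|^\alpha$ and $p_n(U|T|U^*)\to(U|T|U^*)^\alpha$ in norm. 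Combined with the polynomial identity above and with $U|T|U^* = |T^*|$ from \eqref{eqn:key relationship between two 1/2}, this produces
\[
U|T|^\alpha U^* = (U|T|U^*)^\alpha = |T^*|^\alpha,
\]
which is (i).

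Part (ii) follows by right-multiplying (i) by $U$: $|T^*|^\alpha U = U|T|^\alpha U^*U = U|T|^\alpha P_{\overline{\mathcal{R}(T^*)}}$, and since $\mathcal{R}(|T|^\alpha)\subseteq\overline{\mathcal{R}(|T|)} = \overline{\mathcal{R}(T^*)}$ and $|T|^\alpha$ is self-adjoint, one has $|T|^\alpha P_{\overline{\mathcal{R}(T^*)}} = |T|^\alpha$, giving $|T^*|^\alpha U = U|T|^\alpha$. For (iii), Remark~\ref{rem:existence of polar decomposition} identifies $T^*=U^*|T^*|$ as the polar decomposition of $T^*$, and the version of \eqref{eqn:key relationship between two 1/2} for this decomposition reads $U^*|T^*|U = |T|$; applying (i) with $T^*$ and $U^*$ in place of $T$ and $U$ then gives $U^*|T^*|^\alpha U = (U^*|T^*|U)^\alpha = |T|^\alpha$.

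The main obstacle is the step in the second paragraph from the polynomial identity to the fractional-power identity: one must approximate by polynomials \emph{without} constant term, because the constant coefficient would otherwise contribute a spurious factor $UU^* = P_{\overline{\mathcal{R}(T)}}$, which in general differs from the identity and would destroy the intertwining with $U|T|^\alpha U^*$.
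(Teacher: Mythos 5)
Your proposal is correct and follows essentially the same route as the paper: the identity $(U|T|U^*)^n=U|T|^nU^*$ obtained by absorbing $U^*U$ into powers of $|T|$, uniform approximation of $t^\alpha$ by polynomials vanishing at $0$, the relation $U|T|U^*=|T^*|$ from \eqref{eqn:key relationship between two 1/2} for part (i), absorption of $U^*U$ into $|T|^\alpha$ for part (ii), and the symmetry $T\leftrightarrow T^*$, $U\leftrightarrow U^*$ for part (iii). Your closing observation about why the approximating polynomials must have zero constant term is exactly the point the paper handles by imposing $P_m(0)=0$.
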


\begin{proof}(i) Since $U^*U |T|=|T|$, we have
\begin{equation}\label{equ:n-th power of |T-sta|}\big(U|T|U^*\big)^n=U|T|^nU^*\ \mbox{for any $n\in\mathbb{N}$}.\end{equation}
Let $f(t)=t^\alpha$ and choose any sequence $\{P_m\}_{m=1}^\infty$ of polynomials  such that $P_m(0)=0$ ($\forall m\in\mathbb{N}$), and $P_m(t)\to f(t)$ uniformly
on the interval $\left[0,\big\Vert |T|\big\Vert\right]$. Then from \eqref{eqn:key relationship between two 1/2} and \eqref{equ:n-th power of |T-sta|}, we have
\begin{eqnarray*}U|T|^\alpha U^*&=&U\,f(|T|)U^*=\lim_{m\to\infty}UP_m(|T|)U^*=\lim_{m\to\infty}P_m\big(U|T|U^*\big)\\
&=&f\big(U|T|U^*\big)=(U|T|U^*)^\alpha=|T^*|^\alpha.
\end{eqnarray*}

(ii) By Proposition~\ref{prop:Range Closure of T alpha and T} and Lemma~\ref{lem:Range closure of TT and T}, we have
$$\overline{\mathcal{R}(|T|^\alpha)}=\overline{\mathcal{R}(T^*T)}=\overline{\mathcal{R}(T^*)},$$
and thus $U^*U |T|^\alpha=|T|^\alpha$. Taking $*$-operation, we get $|T|^\alpha=|T|^\alpha U^*U$.
It follows from (i) that
$$U|T|^\alpha=U\big(|T|^\alpha U^*U\big)=\big(U|T|^\alpha U^*\big)U=|T^*|^\alpha U.$$

(iii) Since $T^*=U^*|T^*|$ is the polar decomposition of $T^*$, the conclusion follows immediately from (i) by replacing the pair $(U,T)$ with $(U^*,T^*)$.
\end{proof}

Before ending this section, we provide a  criteria for the polar decomposition  as follows:
\begin{theorem}\label{thm:uniqueness of polar decomposition-1} Let $T\in\mathcal{L}(H,K)$ be such that $\overline{\mathcal{R}(T^*)}$ is orthogonally complemented. Let $U\in \mathcal{L}(H,K)$ be a partial isometry which satisfies
\begin{equation}\label{equ:two conditions for polar decomposition-1}T=U|T|\ \mbox{and}\ \mathcal{N}(T)\subseteq \mathcal{N}(U).
\end{equation}Then $T=U|T|$ is the polar decomposition of $T$.
\end{theorem}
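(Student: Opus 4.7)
The plan is to reduce the theorem to showing the single set-equality $\mathcal{R}(U^*)=\overline{\mathcal{R}(T^*)}$. Since $U^*U$ is a self-adjoint projection in $\mathcal{L}(H)$ whose range equals $\mathcal{R}(U^*)$ (by $U^*UU^*=U^*$, which follows from Proposition~\ref{prop:basic property of partial isometry} applied to the partial isometry $U^*$), once that equality is in hand, uniqueness of the orthogonal projection onto an orthogonally complemented submodule will force $U^*U=P_{\overline{\mathcal{R}(T^*)}}$, so the pair of conditions \eqref{equ:two conditions of polar decomposition} defining the polar decomposition is met.

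To prove $\overline{\mathcal{R}(T^*)}\subseteq\mathcal{R}(U^*)$, I would compute $T^*T$ in two ways. From $T=U|T|$ one gets $T^*T=|T|\,U^*U\,|T|$, and of course also $T^*T=|T|^2$. Writing $P=U^*U$ and subtracting yields $|T|(I-P)|T|=0$. Because $I-P$ is a self-adjoint projection, this equals $\bigl((I-P)|T|\bigr)^*\bigl((I-P)|T|\bigr)$, so the $C^*$-identity forces $(I-P)|T|=0$, equivalently $P|T|=|T|$. Thus $\mathcal{R}(|T|)\subseteq\mathcal{R}(P)=\mathcal{R}(U^*)$, and passing to closures — using the identification $\overline{\mathcal{R}(|T|)}=\overline{\mathcal{R}(T^*)}$ recorded in \eqref{equ:the closures of one half-1} and the fact that the range of a projection is closed — delivers the desired inclusion.

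For the reverse inclusion $\mathcal{R}(U^*)\subseteq\overline{\mathcal{R}(T^*)}$, I plan to bring in the hypothesis $\mathcal{N}(T)\subseteq\mathcal{N}(U)$ directly. For any $y\in K$ and any $x\in\mathcal{N}(T)\subseteq\mathcal{N}(U)$, one has $\langle U^*y,x\rangle=\langle y,Ux\rangle=0$, placing $\mathcal{R}(U^*)$ inside $\mathcal{N}(T)^\perp$. Here the orthogonal complementability of $\overline{\mathcal{R}(T^*)}$ is essential: the always-valid identity $\mathcal{N}(T)=\overline{\mathcal{R}(T^*)}^\perp$ upgrades, via the biduality $M^{\perp\perp}=M$ that holds precisely when $M$ is orthogonally complemented, to $\mathcal{N}(T)^\perp=\overline{\mathcal{R}(T^*)}$, yielding $\mathcal{R}(U^*)\subseteq\overline{\mathcal{R}(T^*)}$.

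The reverse inclusion is the principal obstacle, since it is the only place where both the kernel hypothesis $\mathcal{N}(T)\subseteq\mathcal{N}(U)$ and the complementability assumption are jointly used; the first inclusion rests only on the algebraic identity $T=U|T|$ and the fact that $U^*U$ is a projection. Combining the two inclusions then gives $\mathcal{R}(U^*U)=\overline{\mathcal{R}(T^*)}$, from which $U^*U=P_{\overline{\mathcal{R}(T^*)}}$ and the polar decomposition follow at once.
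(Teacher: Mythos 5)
Your proposal is correct and follows essentially the same route as the paper: both establish $U^*U\,|T|=|T|$ (you via the $C^*$-identity applied to $\bigl((I-P)|T|\bigr)^*\bigl((I-P)|T|\bigr)=0$, the paper via the pointwise computation $\Vert(I-Q)|T|x\Vert^2=\Vert\langle(I-Q)|T|x,|T|x\rangle\Vert=0$), giving $\overline{\mathcal{R}(T^*)}\subseteq\mathcal{R}(U^*U)$, and both obtain the reverse inclusion from $\mathcal{N}(T)\subseteq\mathcal{N}(U)$ together with $\mathcal{N}(T)^\perp=\overline{\mathcal{R}(T^*)}$, which requires the orthogonal complementability hypothesis exactly as you note.
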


\begin{proof} By assumption, $Q=U^*U$ is a projection. For any $x\in H$, we have
\begin{equation*}\langle |T|x,|T|x\rangle=\langle Tx,Tx\rangle=\langle U|T|x,U|T|x\rangle=\langle Q|T|x,|T|x\rangle,
\end{equation*}and thus
\begin{equation*}\big\Vert (I-Q)|T|x\big\Vert^2=\big\Vert\langle (I-Q)|T|x,|T|x\rangle\big\Vert=0,
\end{equation*}
hence  $Q|T|=|T|$. It follows that $\overline{\mathcal{R}(T^*)}=\overline{\mathcal{R}(|T|)}\subseteq \mathcal{R}(Q)$. On the other hand, by assumption we have
\begin{eqnarray*}&&\overline{\mathcal{R}(T^*)}=\mathcal{N}(T)^\bot\supseteq \mathcal{N}(U)^\bot=\mathcal{N}(Q)^\bot=\mathcal{R}(Q),
\end{eqnarray*}
hence $\overline{\mathcal{R}(T^*)}=\mathcal{R}(Q)$ and thus $Q=P_{\overline{\mathcal{R}(T^*)}}$.
\end{proof}

\begin{remark}\label{rem:differences 1} Let $T\in\mathcal{L}(H,K)$, where  $H$ and $K$ are both Hilbert spaces.
 In this case $\overline{\mathcal{R}(T^*)}$ is always orthogonally complemented, so if $U$ is a partial isometry such that \eqref{equ:two conditions for polar decomposition-1} is satisfied, then $U|T|$ is exactly the polar decomposition of $T$.

 Unlike the assertion given in \cite[P.\,3400]{Karizaki-Hassani-Amyari}, the same is not true for general Hilbert $C^*$-modules $H$ and $K$,
 since $\overline{\mathcal{R}(T^*)}$ can be not orthogonally complemented for some $T\in\mathcal{L}(H,K)$.
 Indeed, there exist a Hilbert $C^*$-module $H$, and an adjointable $T$ and a partial isometry $U$ on $H$ such that \eqref{equ:polar decomposition-Hilbert space case} is satisfied, whereas $T$ has no polar decomposition. Such an example is as follows:
\end{remark}

\begin{example}\label{ex:countable example}{\rm
Let $H$ be any countably infinite-dimensional Hilbert space, $\mathcal{L}(H)$ and $\mathcal{C}(H)$ be the set of bounded
 linear operators and compact operators on $H$, respectively.
 Given any  orthogonal normalized basis $\{e_n: n\in\mathbb{N}\}$ for $H$, let $S\in\mathcal{C}(H)$ be defined by
$$S(e_n)=\frac{1}{n}e_n,\ \mbox{for any $n\in\mathbb{N}$}.$$ Clearly, $S$ is a positive element in  $\mathcal{C}(H)$.
Let $K=\mathfrak{A}=\mathcal{L}(H)$. With the inner product given by $$\big<X,Y\big>=X^*Y \ \mbox{for any $X,Y\in K$},$$
$K$ is a  Hilbert $\mathfrak{A}$-module.

Let $T: K\to K$ be defined by $T(X)=SX$ for any $X\in K$. Clearly, $T\in \mathcal{L}(K)_+$ and $\mathcal{R}(T)\subseteq \mathcal{C}(H)$.
Given any $n\in\mathbb{N}$, let $P_n$ be the projection from $H$ onto the linear subspace spanned by $\{e_1,e_2,\cdots,e_n\}$.
Let $X_n\in K$ be defined by
$$X_n(e_j)=\left\{
         \begin{array}{ll}
           je_j, &\mbox{if $1\le j\le n$}, \\
           0, &\mbox{otherwise.} \\
         \end{array}
       \right..$$

It is obvious that $T(X_n)=P_n$, which implies that $\overline{\mathcal{R}(T)}=\mathcal{C}(H)$, hence $\overline{\mathcal{R}(T)}^\bot=\{0\}$,
therefore $\overline{\mathcal{R}(T)}$  fails to be orthogonally complemented. By Remark~\ref{rem:existence of polar decomposition}, we conclude that $T$ has no polar decomposition. Furthermore, given any $X\in K$ such that $T(X)=SX=0$, then $X=0$ since $S$ is injective. It follows that $\mathcal{N}(T)=\{0\}$.

Now, let $U$ be the identity operator on $K$. Then since $T$ is positive, we have $T=U|T|$ and
$\mathcal{N}(U)=\mathcal{N}(T)$, whereas $T$ has no polar decomposition.
}\end{example}

\section{Characterizations of centered operators}\label{sec:defn of centered operators}
In this section, we study centered operators in the  general setting of Hilbert $C^*$-modules.

\begin{definition}\cite{Morrel}\ An element $T\in\mathcal{L}(H)$ is said to be centered if the following sequence
$$\cdots, T^3(T^3)^*, T^2(T^2)^*,TT^*,T^*T,(T^2)^*T^2,(T^3)^*T^3,\cdots$$
consists of mutually commuting operators.
\end{definition}

We began with a cancelation law introduced in \cite[Lemma~3.7]{Ito}. Let $T=U|T|$ be the polar decomposition of $T\in\mathcal{L}(H)$.
Suppose that $n\in\mathbb{N}$ is given such that
\begin{equation}\label{equ:commutative condition for first n terms}\big[U^k|T|(U^k)^*,|T|\big]=0 \quad \mbox{for $1\le k\le n$}.\end{equation} Then
by Proposition~\ref{prop:commutative property extended-33} we have \begin{equation}\label{equ:commutative condition for first n terms+++}\big[U^k|T|(U^k)^*,U^*U\big]=0 \quad \mbox{for $1\le k\le n$},\end{equation}
hence for any $s,t\in\mathbb{N}$ with $1\le t\le s\le n+1$, we have
\begin{equation}\label{equ:assumption commutative condition+0} U^s|T|(U^s)^*U^t=U^s|T|(U^{s-t})^*\ \mbox{and}\ (U^t)^*U^s|T|(U^s)^*=U^{s-t}|T|(U^s)^*.
\end{equation}
Indeed, by \eqref{equ:commutative condition for first n terms} and \eqref{equ:commutative condition for first n terms+++} we have
\begin{eqnarray*}U^s|T|(U^s)^*U^t&=&U\cdot U^{s-1}|T|(U^{s-1})^*\cdot U^*U\cdot U^{t-1}\\
&=&U\cdot U^*U\cdot  U^{s-1}|T|(U^{s-1})^*\cdot U^{t-1}\\
&=&U\cdot U^{s-1}|T|(U^{s-1})^*\cdot U^{t-1}\\
&=&U^2\cdot U^{s-2}|T|(U^{s-2})^*\cdot U^{t-2}\\
&=&\cdots=U^s|T|(U^{s-t})^*.
\end{eqnarray*}
Taking $*$-operation, we get the second equation in \eqref{equ:assumption commutative condition+0}.

Now we are ready to state the technical lemma of this section,  which is a modification of \cite[Lemma~4.2]{Ito}.
\begin{lemma}\label{lem:Ito's 1st technical lemma}\ Suppose that $T=U|T|$ is the polar decomposition of $T\in\mathcal{L}(H)$. Let $n\in\mathbb{N}$ be given such that
\begin{equation}\label{equ:key commutative  assumption-00}\big[U^k|T|(U^k)^*, |T^l|\big]=0\quad  \mbox{for any $k,l\in\mathbb{N}$ with $k+l\le n+1$}.
\end{equation}
Then the following statements are equivalent:
\begin{enumerate}
\item[{\rm (i)}] $\big[U^s|T|(U^s)^*, |T^t|\big]=0$ for some $s,t\in\mathbb{N}$ with $s+t=n+2$;
\item[{\rm (ii)}] $\big[U^s|T|(U^s)^*, |T^t|\big]=0$ for any $s,t\in\mathbb{N}$ with $s+t=n+2$.
\end{enumerate}
\end{lemma}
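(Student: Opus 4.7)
I will write $B_k := U^k|T|(U^*)^k$ and $P_t := (T^t)^*T^t = |T^t|^2$. Since $|T^t|$ is positive, Proposition~\ref{prop:commutative property extended-1} (taking $f(x)=\sqrt{x}$ and $f(x)=x^2$) gives $[B_s,|T^t|]=0 \Longleftrightarrow [B_s,P_t]=0$, so it suffices to argue with the $P_t$'s. The hypothesis specialized to $l=1$ says $[B_k,|T|]=0$ for $k\le n$, and Proposition~\ref{prop:commutative property extended-33} then upgrades this to $[B_k,U^*U]=0$ for the same range. Combined with the elementary identities $U^*UU^*=U^*$, $U^*U|T|=|T|$, and $U^*T=T^*U=|T|$ (both immediate from $T=U|T|$), these will yield the intertwining relations
\[
TB_k=B_{k+1}T,\qquad B_kT^*=T^*B_{k+1}\qquad(k\le n).
\]

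Plugging these into the recursion $P_t=T^*P_{t-1}T$ is expected to produce the key identity
\[
[B_s,P_t]=T^*\,[B_{s+1},P_{t-1}]\,T\qquad(1\le s\le n,\ s+t=n+2),
\]
which iterates to
\[
[B_s,P_t]=(T^*)^{t-1}\,[B_{n+1},P_1]\,T^{t-1}
\]
for every pair $(s,t)$ at level $n+2$. This instantly delivers the ``downward'' chain of implications $(n+1,1)\Rightarrow(n,2)\Rightarrow\cdots\Rightarrow(1,n+1)$, and more generally $[B_s,P_t]=0$ implies $[B_{s'},P_{t'}]=0$ for every $s'\le s$ (since a further sandwich by $T^*,T$ of something already zero remains zero). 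In particular, (ii)$\Rightarrow$(i) is trivial and (i) holding at $(n+1,1)$ forces (ii).

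The remaining task — the reverse direction, promoting commutativity at some pair $(s_0,t_0)$ with $s_0\le n$ up to the extreme pair $(n+1,1)$ — is where the main difficulty lies. My plan is to complement the inner recursion with the outer counterpart
\[
T\,[B_s,P_t]\,T^*=|T^*|^2\,[B_{s+1},P_{t-1}]\,|T^*|^2,
\]
which will follow from $TP_tT^*=|T^*|^2P_{t-1}|T^*|^2$ together with the derived commutation $[B_{s+1},|T^*|^2]=0$ for $s+1\le n+1$ (itself a consequence of the double computation $TB_sT^*=B_{s+1}|T^*|^2=|T^*|^2B_{s+1}$ via the two intertwinings). Using in addition the commutation $[UU^*,U^*U]=0$ that follows from the $(k,l)=(1,1)$ case of the hypothesis $[|T|,|T^*|]=0$ (apply Proposition~\ref{prop:commutative property extended-33} to each of the positive operators $|T|$ and $|T^*|$), together with the structural observation that the range of the commutator $[B_{s+1},P_{t-1}]$ lies in $\overline{\mathcal{R}(T)}+\overline{\mathcal{R}(T^*)}$, one should recover $[B_{s+1},P_{t-1}]=0$ from $T^*[B_{s+1},P_{t-1}]T=0$ by a range/kernel analysis that separates the two summands using the orthogonal projections $UU^*$ and $U^*U$.

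The main obstacle is precisely this reverse step. Because $T$ is only a partial isometry — not an isometry — the sandwich map $X\mapsto T^*XT$ is not injective, so one cannot naively ``cancel'' $T^*$ and $T$. Overcoming this requires exploiting the very specific algebraic form of $[B_{s+1},P_{t-1}]$, the derived mutual commutation of the two support projections $UU^*=P_{\overline{\mathcal{R}(T)}}$ and $U^*U=P_{\overline{\mathcal{R}(T^*)}}$, and the orthogonal complementability of $\overline{\mathcal{R}(T)}$ and $\overline{\mathcal{R}(T^*)}$ guaranteed by the existence of the polar decomposition of $T$ (Remark~\ref{rem:existence of polar decomposition}). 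The careful bookkeeping between the inner identity ($T^*\cdot T$) and outer identity ($T\cdot T^*$), synchronized via the commuting projection pair, is what I expect to complete the equivalence.
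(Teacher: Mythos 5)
Your reduction to the squared moduli $P_t=|T^t|^2$ is fine, and your ``inner'' recursion is correct: for $s\le n$ the hypothesis at $(k,l)=(s,1)$ gives $\big[U^s|T|(U^s)^*,|T|\big]=0$, whence $TB_s=B_{s+1}T$ and $B_sT^*=T^*B_{s+1}$, and combining this with $P_t=T^*P_{t-1}T$ yields $[B_s,P_t]=T^*[B_{s+1},P_{t-1}]T$. This is essentially the recursion the paper uses (there written as $|T^t|^2\cdot U^s|T|(U^s)^*=U^*\big(|T^{t-1}|^2\cdot U^{s+1}|T|(U^{s+1})^*\big)U|T|^2$ and its mirror image), and it correctly gives the ``downward'' implications: vanishing of the commutator at $(s+1,t-1)$ forces vanishing at $(s,t)$.

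The genuine gap is the opposite direction, which you explicitly leave as a plan. Statement (i) only hands you \emph{some} pair $(s_0,t_0)$, so to reach (ii) you must propagate commutativity upward to all pairs with $s>s_0$, and this is exactly the step you describe as what you ``expect'' to work rather than prove. Your sketch of it also contains a slip: conjugation by $T(\cdot)T^*$ produces $|T^*|^2[B_{s+1},P_{t-1}]|T^*|^2=0$, not $T^*[B_{s+1},P_{t-1}]T=0$, and the observation about the range of the commutator lying in $\overline{\mathcal{R}(T)}+\overline{\mathcal{R}(T^*)}$ is not what closes the argument. What does close it: the hypothesis at $(k,l)=(1,t-1)$ gives $[|T^*|,P_{t-1}]=0$, hence $[UU^*,P_{t-1}]=0$ by Proposition~\ref{prop:commutative property extended-33}; together with $UU^*B_{s+1}=B_{s+1}UU^*=B_{s+1}$ this yields $UU^*[B_{s+1},P_{t-1}]UU^*=[B_{s+1},P_{t-1}]$, and since $\overline{\mathcal{R}(|T^*|^2)}=\mathcal{R}(UU^*)$ one may pass from $|T^*|^2X|T^*|^2=0$ to $UU^*XUU^*=0$ and conclude $X=0$. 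So your plan is completable, but the decisive cancellation --- the only nontrivial content of the lemma --- is asserted, not carried out. For comparison, the paper handles this one-sidedly and more cleanly: it left-multiplies its recursion by $U$ to obtain an identity at level $t-1$ with a common right factor $U|T|^2$, and cancels that factor using $\overline{\mathcal{R}(U|T|^2)}=\mathcal{R}(UU^*)$ and $\big[|T^{t-1}|^2,UU^*\big]=0$.
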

\begin{proof} Let $s,t\in \mathbb{N}$ be such that $s+t=n+2$. Put
\begin{equation}\label{equ:defn of At and Bt}A_t=|T^t|^2\cdot U^s|T|(U^s)^*\ \mbox{and}\ B_t=U^s|T|(U^s)^*\cdot |T^t|^2.
\end{equation}
Then clearly,
\begin{equation}\label{equ:the meaning of At equals Bt}\big[U^s|T|(U^s)^*, |T^t|\big]=0\Longleftrightarrow A_t=B_t.\end{equation}
Substituting $k=1$ and $U|T|U^*=|T^*|$ into \eqref{equ:key commutative  assumption-00} yields
\begin{equation*}\big[|T^*|, |T^l|\big]=0 \quad \mbox{for $1\le l\le n$}, \end{equation*}
which leads by
Proposition~\ref{prop:commutative property extended-33} to
\begin{equation}\label{equ:commutative respect to UU-star-1}\big[UU^*, |T^l|\big]=0 \quad \mbox{for $1\le l\le n$}.\end{equation}

Note that $t=n+2-s\le n+1$, so if $t\ge 2$, then by \eqref{equ:key commutative  assumption-00},
\begin{align}|T^t|^2&=T^*\cdot (T^{t-1})^*T^{t-1}\cdot T=|T|U^*\cdot |T^{t-1}|^2\cdot U|T|\nonumber\\
&=U^*\cdot U|T|U^*\cdot |T^{t-1}|^2\cdot U|T|
=U^*\cdot |T^{t-1}|^2 \cdot    U|T|U^* \cdot U|T|\nonumber\\
\label{equ:computation of square root of T t}&=U^*\cdot |T^{t-1}|^2 \cdot    U|T|^2.
\end{align}
Assume now that $t\ge 2$. Then $s+1=(n+2-t)+1\le n+1$, hence by \eqref{equ:defn of At and Bt}, \eqref{equ:computation of square root of T t},  \eqref{equ:key commutative  assumption-00}
 with $l=1$ and  \eqref{equ:assumption commutative condition+0},
   we have
\begin{align}A_t&=U^*|T^{t-1}|^2 U\cdot |T|^2\cdot U^s|T|(U^s)^*\nonumber\\
\label{eqn:relationship between At and A t-1-0}&=U^*|T^{t-1}|^2 U\cdot U^s|T|(U^s)^* \cdot |T|^2\\
\label{eqn:relationship between At and A t-1}&=U^*\cdot A_{t-1}\cdot U|T|^2.
\end{align}
Similarly,
\begin{align} B_t&=U^s|T|(U^s)^*\cdot U^*UU^* |T^{t-1}|^2 U|T|^2\nonumber\\
&=U^*U\cdot U^s|T|(U^s)^*U^* |T^{t-1}|^2 U|T|^2\nonumber\\
&=U^*\cdot U^{s+1}|T|(U^{s+1})^*|T^{t-1}|^2 \cdot U|T|^2\nonumber\\
\label{eqn:relationship between Bt and B t-1}&=U^*\cdot B_{t-1}\cdot U|T|^2.
\end{align}

It follows from \eqref{eqn:relationship between At and A t-1} and  \eqref{eqn:relationship between Bt and B t-1} that $A_t=B_t$ whenever $A_{t-1}=B_{t-1}$.
Suppose on the contrary that $A_t=B_t$. Then by \eqref{eqn:relationship between At and A t-1-0}, \eqref{equ:commutative respect to UU-star-1}
and \eqref{equ:defn of At and Bt}, we have
\begin{align*}UA_t&=UU^*\cdot |T^{t-1}|^2 U\cdot U^s|T|(U^s)^* \cdot |T|^2=|T^{t-1}|^2\cdot UU^*U\cdot U^s|T|(U^s)^* |T|^2\\
&=|T^{t-1}|^2\cdot U\cdot U^s|T|(U^s)^* \cdot |T|^2=|T^{t-1}|^2\cdot U^{s+1}|T|(U^{s+1})^* \cdot U|T|^2\\
&=A_{t-1}\cdot U|T|^2
\end{align*}
Furthermore, it can be deduced directly from \eqref{eqn:relationship between Bt and B t-1} and \eqref{equ:defn of At and Bt} that
\begin{align*}UB_t=B_{t-1}\cdot U|T|^2.
\end{align*}
As a result, we obtain
\begin{equation*}A_{t-1}\cdot U|T|^2=B_{t-1}\cdot U|T|^2,
\end{equation*}
which gives
\begin{equation*}A_{t-1}=A_{t-1}UU^*=B_{t-1}UU^*=B_{t-1},
\end{equation*}
since $\overline{\mathcal{R}(U|T|^2)}=\mathcal{R}(UU^*)$ and $\big[|T^{t-1}|^2, UU^*\big]=0$.

Letting $t=2,3,\cdots, n+1$, respectively, we conclude that
$$A_1=B_1\Longleftrightarrow A_2=B_2\Longleftrightarrow\cdots\Longleftrightarrow A_{n+1}=B_{n+1}.$$
In view of \eqref{equ:the meaning of At equals Bt}, the proof of the equivalence of (i) and (ii) is complete.
\end{proof}

In view of Lemma~\ref{lem:Ito's 1st technical lemma}, we introduce the terms  of restricted sequence and the commutativity of an operator along a restricted sequence as follows:
\begin{definition}\label{defn: restricted sequence} A sequence $\{t_n\}_{n=1}^\infty$ is called restricted if
$t_n\in \{1,2,\cdots,n\}$ for each $n\in\mathbb{N}$, and an operator $T\in\mathcal{L}(H)$ is called commutative along this restricted sequence
if $T$ has the polar decomposition $T=U|T|$ such that
$$\Big[ U^{t_n}|T|\big(U^{t_n}\big)^*, \big|T^{n+1-t_n}\big|\Big]=0\quad \mbox{for any $n\in \mathbb{N}$.}$$
\end{definition}

A direct application of Lemma~\ref{lem:Ito's 1st technical lemma} and Definition~\ref{defn: restricted sequence} gives the following corollary:

\begin{corollary}\label{cor:an explanation of Ito's 1st technical lemma}\ Let $T\in\mathcal{L}(H)$ have the polar decomposition $T=U|T|$. Then the following statements are equivalent:
\begin{enumerate}
\item[{\rm (i)}] $\big[U^s|T|(U^s)^*, |T^t|\big]=0$ for any $s,t\in\mathbb{N}$;
\item[{\rm (ii)}] $T$ is commutative along any restricted sequence;
\item[{\rm (iii)}] $T$ is commutative along some restricted sequence.
\end{enumerate}
\end{corollary}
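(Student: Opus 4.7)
The plan is to run the cycle (i) $\Longrightarrow$ (ii) $\Longrightarrow$ (iii) $\Longrightarrow$ (i), in which the first two implications fall straight out of the definitions and only the last requires any real work. For (i) $\Longrightarrow$ (ii), given any restricted sequence $\{t_n\}_{n=1}^\infty$ and any $n \in \mathbb{N}$, the pair $(s,t) = (t_n,\, n+1-t_n)$ lies in $\mathbb{N} \times \mathbb{N}$, so the vanishing commutator demanded by Definition~\ref{defn: restricted sequence} is just a specialization of (i). For (ii) $\Longrightarrow$ (iii), I would simply exhibit the constant sequence $t_n \equiv 1$, which is plainly restricted.

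The substantive direction is (iii) $\Longrightarrow$ (i). Fix a restricted sequence $\{t_n\}_{n=1}^\infty$ along which $T$ is commutative, and proceed by induction on $N \ge 2$ to establish the stronger assertion
\begin{equation*}
\big[U^s|T|(U^s)^*, |T^t|\big] = 0 \quad \text{for all } s, t \in \mathbb{N} \text{ with } s + t \le N.
\end{equation*}
The base case $N = 2$ forces $s = t = 1$, and because any restricted sequence necessarily satisfies $t_1 = 1$, the commutativity hypothesis at level $n = 1$ of Definition~\ref{defn: restricted sequence} reads $\big[U|T|U^*, |T|\big] = 0$, which is exactly what is needed. For the inductive step, suppose the assertion holds for some $N \ge 2$. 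The inductive hypothesis is then word-for-word the standing assumption \eqref{equ:key commutative  assumption-00} of Lemma~\ref{lem:Ito's 1st technical lemma} with parameter $n = N - 1$, while the commutativity condition at level $n = N$ in Definition~\ref{defn: restricted sequence} supplies the vanishing of at least one commutator with $s + t = N + 1$, namely at $(s,t) = (t_N,\, N + 1 - t_N)$. Applying Lemma~\ref{lem:Ito's 1st technical lemma} then promotes this single equality to every $(s,t) \in \mathbb{N} \times \mathbb{N}$ with $s + t = N + 1$, closing the induction. Letting $N \to \infty$ gives (i).

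The only delicate point is the off-by-one bookkeeping: Definition~\ref{defn: restricted sequence} parametrizes its sequence by $n$ with commutators of total degree $n + 1$, whereas Lemma~\ref{lem:Ito's 1st technical lemma}'s hypothesis ranges over degrees $\le n + 1$ and its conclusion upgrades to degree $n + 2$. Once one records that the inductive hypothesis at level $N$ matches Lemma~\ref{lem:Ito's 1st technical lemma} at parameter $n = N - 1$, the remainder of the argument is essentially a single invocation of that lemma inside an induction, so I do not foresee any further obstacle beyond lining up the indices correctly.
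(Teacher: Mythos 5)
Your proof is correct and follows exactly the route the paper intends: the authors state the corollary as ``a direct application of Lemma~\ref{lem:Ito's 1st technical lemma} and Definition~\ref{defn: restricted sequence}'' without writing out the induction, and your argument simply makes that induction explicit, with the index bookkeeping ($n = N-1$ in the lemma versus total degree $N+1$ in the conclusion) handled correctly.
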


\begin{lemma}\label{lem:Ito's 2nd technical lemma}{\rm \cite[Lemma~4.3]{Ito}}\  Suppose that $T=U|T|$ is the polar decomposition of $T\in\mathcal{L}(H)$. Let $n\in\mathbb{N}$ be given such that \eqref{equ:commutative condition for first n terms} is satisfied. Then for $1\le k\le n+1$,
\begin{equation}\label{equ:form of square root of T k star-11}|(T^k)^*|=U|T|U^*\cdot U^2|T|(U^2)^*\cdot\ \cdots\ \cdot \ U^k|T|(U^k)^*.
\end{equation}
\end{lemma}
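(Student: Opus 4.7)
The plan is to prove the formula by induction on $k$, simultaneously maintaining the stronger inductive hypothesis that $A_1 A_2 \cdots A_k$, with $A_j := U^j|T|(U^j)^*$, is both positive and self-adjoint. The base case $k=1$ reduces to $A_1 = U|T|U^* = |T^*|$ from Lemma~\ref{lem:relationship between |T| and |T-star| alpha}(i).

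The main tool is a sliding identity $UA_j = A_{j+1}U$, valid for $0 \le j \le n$, obtained from \eqref{equ:assumption commutative condition+0} by taking $s = j+1$ and $t = 1$. Combined with $UP = U$ from Proposition~\ref{prop:basic property of partial isometry} (where $P = U^*U$) and the absorption $A_{k+1}UU^* = A_{k+1}$ (which in turn follows from $PU^* = U^*$), iterated application of the sliding identity yields
\[
U\bigl(A_1 A_2 \cdots A_k\bigr) U^* = A_2 A_3 \cdots A_{k+1} \quad \text{for } k \le n.
\]
The hypothesis \eqref{equ:commutative condition for first n terms} together with Propositions~\ref{prop:commutative property extended-3} and~\ref{prop:commutative property extended-33} further ensures that each $A_j$ with $j \le n$ commutes with every positive power of $|T|$ and with $P$.

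For the inductive step $k \mapsto k+1$ (for $k \le n$) I would begin from
\[
|(T^{k+1})^*|^2 = T \cdot |(T^k)^*|^2 \cdot T^* = U|T|\,(A_1 \cdots A_k)^2\,|T|U^*.
\]
Setting $X := A_1 \cdots A_k$, the commutativity of $|T|$ with $X$ collapses this to $UX^2|T|^2U^*$, and the identity $|T|^2U^* = U^*|T^*|^2$ from Lemma~\ref{lem:relationship between |T| and |T-star| alpha}(ii) rewrites it as $UX^2U^* \cdot |T^*|^2$. Using $[X, P] = 0$ and $UP = U$, one checks $(UXU^*)^2 = UX(U^*U)XU^* = UXPXU^* = UPX^2U^* = UX^2U^*$, so $UX^2U^* = (UXU^*)^2$. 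Substituting the displayed sliding identity and $|T^*|^2 = A_1^2$ delivers
\[
|(T^{k+1})^*|^2 = (A_2 A_3 \cdots A_{k+1})^2\, A_1^2.
\]

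To finish, I would show $[A_1, A_2 \cdots A_{k+1}] = 0$ by computing $A_1(A_2 \cdots A_{k+1}) = |T^*|\,UXU^*$ and pushing $|T^*|$ through using $|T^*|U = U|T|$, the commutativity of $|T|$ with $X$, and $|T|U^* = U^*|T^*|$ from Lemma~\ref{lem:relationship between |T| and |T-star| alpha}. This commutation gives $(A_1 A_2 \cdots A_{k+1})^2 = A_1^2 (A_2 \cdots A_{k+1})^2$, which matches the previously displayed expression; and positivity of $A_1 A_2 \cdots A_{k+1}$ follows because $A_2 \cdots A_{k+1} = UXU^* = (UX^{1/2})(UX^{1/2})^*$ is positive (as $X \ge 0$ by the inductive hypothesis) and commutes with the positive $A_1$, so their product is positive. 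Uniqueness of the positive square root then delivers $|(T^{k+1})^*| = A_1 A_2 \cdots A_{k+1}$. The main obstacle will be the careful bookkeeping: at each step one must track that the commutativity, sliding, and positivity conditions remain valid, and the index bound $k \le n$ must be respected so that every invocation of \eqref{equ:assumption commutative condition+0} applies to the relevant $A_j$.
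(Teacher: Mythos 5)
Your proof is correct, and it is worth noting that it does something the paper itself does not: the paper's ``proof'' of this lemma consists entirely of citing Ito's Hilbert-space argument and asserting that it carries over verbatim to adjointable operators on Hilbert $C^*$-modules, whereas you supply a complete, self-contained induction using only the tools already established in this paper. I checked the key steps: the sliding identity $UA_j=A_{j+1}U$ does follow from \eqref{equ:assumption commutative condition+0} with $s=j+1$, $t=1$ (and for $j=0$ it is just Lemma~\ref{lem:relationship between |T| and |T-star| alpha}(ii)); the absorption $A_{k+1}UU^*=A_{k+1}$ follows from $U^*UU^*=U^*$; the commutation $[A_j,|T|^\alpha]=0$ and $[A_j,U^*U]=0$ for $j\le n$ come from \eqref{equ:commutative condition for first n terms} via Propositions~\ref{prop:commutative property extended-3} and~\ref{prop:commutative property extended-33}; the computation $|(T^{k+1})^*|^2=T\,|(T^k)^*|^2\,T^*=(UXU^*)^2|T^*|^2$ is valid for $k\le n$; and the commutation $[A_1,\,A_2\cdots A_{k+1}]=0$ together with positivity of $UXU^*=(UX^{1/2})(UX^{1/2})^*$ and of $|T^*|$ lets you conclude by uniqueness of the positive square root. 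All index bounds are respected (the step $k\mapsto k+1$ only uses the hypotheses for indices up to $n$, yielding the formula up to $k=n+1$). The payoff of your route is that the Hilbert $C^*$-module reader need not consult Ito's paper and re-verify that nothing in the original argument secretly uses orthogonal complementation or other Hilbert-space-specific facts; the cost is merely the extra bookkeeping you already flag, which you have handled correctly.
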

\begin{proof} This lemma was given in \cite[Lemma~4.3]{Ito}, where $H$ is a Hilbert space and $T\in\mathbb{B}(H)$.
 Checking the proof of \cite[Lemma~4.3]{Ito} carefully, we find out that the same is true for an adjointable operator on a Hilbert $C^*$-module.
\end{proof}

The main result of this section is as follows:
\begin{theorem}\label{thm:many equivalent conditions for centered operator}{\rm (cf.\,\cite[Theorem~4.1]{Ito})}\ Let $T\in\mathcal{L}(H)$ have the polar decomposition $T=U|T|$. Then the following statements are equivalent:
\begin{enumerate}
\item[{\rm (i)}] $T$ is a centered operator;
\item[{\rm (ii)}]$\big[|T^n|, |(T^m)^*|\big]=0$  for any $m,n\in\mathbb{N}$;
\item[{\rm (iii)}]$\big[|T^n|, |T^*|\big]=0$  for any $n\in\mathbb{N}$;
\item[{\rm (iv)}] $T$ is commutative along any restricted sequence;
\item[{\rm (v)}] $T$ is commutative along some restricted sequence;
\item[{\rm (vi)}]  $\big[U^m|T|(U^m)^*, |T^n|\big]=0$ for any $m,n\in\mathbb{N}$;
\item[{\rm (vii)}]$\big[U^n|T|(U^n)^*,|T|\big]=0$ for any $n\in\mathbb{N}$;
\item[{\rm (viii)}]$\big[|(T^n)^*|, |T|\big]=0$  for any $n\in\mathbb{N}$;
\item[{\rm (ix)}] $\big[(U^n)^*|T^*|U^n,|T^*|\big]=0$ for any $n\in\mathbb{N}$;
\item[{\rm (x)}]  $\big[(U^m)^*|T^*|U^m, |(T^n)^*|\big]=0$ for any $m,n\in\mathbb{N}$;
\item[{\rm (xi)}] The operators in
$\left\{|T|, U|T|U^*, U^*|T|U, U^2|T|(U^2)^*, (U^2)^*|T|U^2, \cdots\right\}$ commute with one another.
\end{enumerate}
\end{theorem}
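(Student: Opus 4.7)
My plan is to partition the eleven conditions into an \emph{intrinsic} cluster (i), (ii), (iii), (viii), (xi) --- involving only $|T^n|$ and $|(T^n)^*|$ --- and a \emph{$U$-laden} cluster (iv)--(vii), (ix), (x), establish the internal equivalences in each, and bridge the two clusters via Lemma~\ref{lem:Ito's 2nd technical lemma}. For the intrinsic cluster, (i)$\Leftrightarrow$(ii) follows from the square-root principle: the sequence in (i) is $\{|(T^k)^*|^2,|T^k|^2\}_{k\ge 1}$, and two positive elements of a $C^*$-algebra commute iff their positive square roots do (Proposition~\ref{prop:commutative property extended-1} applied twice with $f(t)=t^{1/2}$); the implications (ii)$\Rightarrow$(iii) and (ii)$\Rightarrow$(viii) are trivial specializations. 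For the $U$-laden cluster, Corollary~\ref{cor:an explanation of Ito's 1st technical lemma} yields (iv)$\Leftrightarrow$(v)$\Leftrightarrow$(vi); condition (vii) joins this group as both a specialization of (vi) (set $n=1$) and a specialization of (v) (take the restricted sequence $t_n=n$, for which $[U^{t_n}|T|(U^{t_n})^*,|T^{n+1-t_n}|]=[U^n|T|(U^n)^*,|T|]$). Finally, the duality $T\leftrightarrow T^*$, $U\leftrightarrow U^*$, valid by Remark~\ref{rem:existence of polar decomposition} together with $|(T^*)^n|=|(T^n)^*|$, gives (vi)$\Leftrightarrow$(x), (vii)$\Leftrightarrow$(ix), and --- crucially --- (iii)$\Leftrightarrow$(viii).

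To bridge the two clusters, I invoke Lemma~\ref{lem:Ito's 2nd technical lemma}: under (vi), the factorization
\[
|(T^n)^*| = U|T|U^*\cdot U^2|T|(U^2)^*\cdots U^n|T|(U^n)^*
\]
has every factor commuting with every $|T^m|$, so their product does too, yielding (ii). The analogous factorization of $|T^n|$ obtained by applying Lemma~\ref{lem:Ito's 2nd technical lemma} to $T^*$, combined with (vi) and its dual (x), delivers the pairwise commutativity in (xi). The only implication not yet accounted for is (iii)$\Rightarrow$(vi), which by the (iii)$\Leftrightarrow$(viii) duality reduces to the key step (viii)$\Rightarrow$(vii).

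The main obstacle is (viii)$\Rightarrow$(vii), which I prove by induction on $n$. The base $n=1$ is $[|T^*|,|T|]=[U|T|U^*,|T|]=0$, which is (vii) at $n=1$. Assuming (vii) for all $k\le n$, iterated application of Lemma~\ref{lem:Ito's 1st technical lemma} upgrades the hypothesis to the full family $[U^s|T|(U^s)^*,|T^t|]=0$ for every $s+t\le n+1$, and Lemma~\ref{lem:Ito's 2nd technical lemma} gives $|(T^{n+1})^*|=|(T^n)^*|\cdot U^{n+1}|T|(U^{n+1})^*$. Combining (viii) with the already-known $[|(T^n)^*|,|T|]=0$ (a consequence of the inductive hypothesis via the (vii)$\Rightarrow$(viii) step above) via the Leibniz rule yields
\[
|(T^n)^*|\cdot C = 0,\qquad C:=\bigl[U^{n+1}|T|(U^{n+1})^*,\,|T|\bigr],
\]
and, since $C$ is skew-adjoint, also $C\cdot|(T^n)^*|=0$. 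The hard part is to deduce $C=0$ from this two-sided annihilation in a Hilbert $C^*$-module, where $|(T^n)^*|$ need not be injective on $\overline{\mathcal{R}(T^n)}$ and the spectral-theoretic shortcut used in the Hilbert space proof of Ito~\cite[Theorem~4.1]{Ito} is unavailable. My plan is to use Proposition~\ref{prop:SOT topology of the projection of the range-1}: once it is checked inductively (via Lemma~\ref{lem:xu and fang-LAA} and the existence of the polar decomposition for $T^n$) that $\overline{\mathcal{R}(|(T^n)^*|)}=\overline{\mathcal{R}(T^n)}$ is orthogonally complemented, the operators $T_k=(\tfrac{1}{k}I+|(T^n)^*|)^{-1}|(T^n)^*|$ converge strongly to $P_{\overline{\mathcal{R}(T^n)}}$, so the two-sided annihilation propagates to $P_{\overline{\mathcal{R}(T^n)}}\cdot C=C\cdot P_{\overline{\mathcal{R}(T^n)}}=0$. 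Combining this with an analysis of the supports and ranges of the two summands of $C$ --- using the inductively available commutation relations, Lemma~\ref{lem:relationship between |T| and |T-star| alpha}, and the inclusion $\overline{\mathcal{R}(T^{n+1})}\subseteq\overline{\mathcal{R}(T^n)}$ --- one verifies that the range of $C$ already lies in $\overline{\mathcal{R}(T^n)}$, so $C=P_{\overline{\mathcal{R}(T^n)}}\cdot C=0$. This completes the induction, establishes (vii) for all $n$, and, via Corollary~\ref{cor:an explanation of Ito's 1st technical lemma} together with the bridging step above, closes all remaining equivalences.
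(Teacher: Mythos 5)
Your overall architecture---two clusters bridged by Lemma~\ref{lem:Ito's 2nd technical lemma} and closed up by the $T\leftrightarrow T^*$ duality---matches the paper's, but you have overlooked the one observation that makes the theorem go through cheaply, and the ``hard step'' you substitute for it has a genuine gap. The observation: since $|T^*|=U|T|U^*$, condition (iii) reads $\big[U^{t_n}|T|(U^{t_n})^*,\,|T^{\,n+1-t_n}|\big]=0$ for the restricted sequence $t_n\equiv 1$; that is, (iii) is \emph{itself} the assertion that $T$ is commutative along a particular restricted sequence, exactly as (vii) is the assertion for $t_n\equiv n$. Hence (iv)$\Rightarrow$(iii)$\Rightarrow$(v) trivially, and Corollary~\ref{cor:an explanation of Ito's 1st technical lemma} already yields (iii)$\Leftrightarrow$(iv)$\Leftrightarrow$(v)$\Leftrightarrow$(vi)$\Leftrightarrow$(vii) with no further work. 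This is precisely the paper's route; your implication (iii)$\Rightarrow$(vi) needs no separate argument, and the induction you build for (viii)$\Rightarrow$(vii) is superfluous. (Your reduction is also slightly miswired: the dual of ``(viii)$\Rightarrow$(vii)'' is ``(iii)$\Rightarrow$(ix)'', not ``(iii)$\Rightarrow$(vi)'', though this could be repaired by routing through the self-dual condition (ii).)

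The gap in the hard step itself: from $|(T^n)^*|\,C=C\,|(T^n)^*|=0$ with $C=\big[U^{n+1}|T|(U^{n+1})^*,|T|\big]$ you want $C=0$, and your plan is to combine $P_{\overline{\mathcal{R}(T^n)}}\,C=0$ with the claim $\mathcal{R}(C)\subseteq\overline{\mathcal{R}(T^n)}$. That claim is unsupported and implausible: $C$ is the difference of $U^{n+1}|T|(U^{n+1})^*\cdot|T|$ and $|T|\cdot U^{n+1}|T|(U^{n+1})^*$, and the latter has range inside $\overline{\mathcal{R}(|T|)}=\overline{\mathcal{R}(T^*)}$, a submodule that in general bears no inclusion relation to $\overline{\mathcal{R}(T^n)}$ (nor is $\mathcal{R}(U^{n+1})$ contained in $\overline{\mathcal{R}(T^n)}$). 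You also leave unproved that $\overline{\mathcal{R}(T^n)}$ is orthogonally complemented, which is a precondition for invoking Proposition~\ref{prop:SOT topology of the projection of the range-1}. A smaller defect of the same kind occurs at (xi): knowing that $U^m|T|(U^m)^*$ commutes with the \emph{products} $|T^n|$ and $|(T^n)^*|$ does not give commutativity of the individual factors $U^s|T|(U^s)^*$ and $(U^t)^*|T|U^t$ with one another; the paper proves (vii)$+$(ix)$\Rightarrow$(xi) by a four-case computation resting on the cancellation identities \eqref{equ:assumption commutative condition+0}, and some such argument is still required.
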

\begin{proof}The proof of (i)$\Longleftrightarrow$(ii) is the same as that given in \cite[Theorem~4.1]{Ito}.
``(ii)$\Longrightarrow$(iii)" is clear by putting $m=1$ in (ii).

``(iii)$\Longleftrightarrow$(vii)":\  Putting $t_n=1$ and $s_n=n$ for any $n\in\mathbb{N}$. Then $T$ is commutative along $\{t_n\}\Longleftrightarrow$ (iii) is satisfied, and $T$ is commutative along $\{s_n\}\Longleftrightarrow$ (vii) is satisfied. The equivalence of (iii)--(vii) then follows from
Corollary~\ref{cor:an explanation of Ito's 1st technical lemma}.

``(vi)$\Longrightarrow$(ii)":\ Let $m$ and $n$ be any in $\mathbb{N}$. From (vii) and Lemma~\ref{lem:Ito's 2nd technical lemma} we know that $|(T^m)^*|$ has the form \eqref{equ:form of square root of T k star-11} with $k$ therein be replaced by $m$. Now, each term in \eqref{equ:form of square root of T k star-11} commutes with $|T^n|$ by (vi), so $\big[|T^n|, |(T^m)^*|\big]=0$.

The proof of the equivalence of (i)--(vii) is therefore complete. Since  $T$ is centered if and only if $T^*$ is centered, the equivalent conditions (viii), (ix) and (x) are then obtained by replacing $T$ and $U$ with $T^*$ and $U^*$, respectively.

It is obvious that (xi)$\Longrightarrow$(vii).

``(vii)+(ix)$\Longrightarrow$(xi)":\ From (vii), (ix) and Proposition~\ref{prop:commutative property extended-33}, we get
\begin{equation*}\label{equ:key commutative condition+2}\big[U^k|T|(U^k)^*,U^*U\big]=\big[(U^k)^*|T^*|U^k,UU^*\big]=0\quad \mbox{for any $k\in\mathbb{N}$}.
\end{equation*}
We prove that
the operators in
\begin{equation}\label{equ:defn of Omega}\Omega=\left\{|T|, U|T|U^*, U^*|T|U, U^2|T|(U^2)^*, (U^2)^*|T|U^2, \cdots\right\}\end{equation} commute with one another. That is,  $[A,B]=0$ for any $A,B\in \Omega$. To this end, four cases are considered as follows:

\textbf{Case 1:}\  $A=U^t|T|(U^t)^*$ and $B=U^s|T|(U^s)^*$ with $1\le t<s$. In this case, we have
\begin{eqnarray*}AB&=&U^t\cdot |T|\cdot U^{s-t}|T|(U^{s-t})^*\cdot (U^t)^*\\
&=&U^t\cdot U^{s-t}|T|(U^{s-t})^* \cdot  |T|\cdot (U^t)^*=BA.
\end{eqnarray*}

\textbf{Case 2:}\  $A=(U^t)^*|T|U^t$ and $B=(U^s)^*|T|U^s$ with $1\le t<s$. In this case, we have
$[A,B]=0$ as shown in Case 1 by replacing $U, T$ with $U^*, T^*$, since $A, B$ can be expressed alternately as $A=(U^{t+1})^*|T^*|U^{t+1}, B=(U^{s+1})^*|T^*|U^{s+1}$.

\textbf{Case 3:}\  $A=U^t|T|(U^t)^*$ and $B=(U^s)^*|T|U^s$ with $t,s\in\mathbb{N}$. In this case, we have
\begin{eqnarray*}BA&=&(U^s)^*|T|U^{s+t}|T|(U^t)^*=(U^s)^*\cdot |T|\cdot U^{s+t}|T|(U^{s+t})^*\cdot U^s\\
&=&(U^s)^*\cdot U^{s+t}|T|(U^{s+t})^* \cdot |T| \cdot U^s=U^t |T|(U^{s+t})^* \cdot |T| \cdot U^s=AB.
\end{eqnarray*}

\textbf{Case 4:}\ $A=|T|$ and $B=(U^s)^*|T|U^s$ with $s\in\mathbb{N}$. In this case, we have
\begin{eqnarray*}AB&=&U^*\cdot U|T|U^* \cdot (U^{s-1})^*|T|U^{s-1}\cdot U\\
&=&U^*\cdot(U^{s-1})^*|T|U^{s-1}\cdot U|T|U^*\cdot U\\
&=&(U^s)^*|T|U^s\cdot|T|U^*U=BA.
\end{eqnarray*}
This completes the proof that any two elements in
$\Omega$ are commutative.
\end{proof}

\textbf{Acknowledgments.} The authors thank the referee for very helpful comments and suggestions.

\bibliographystyle{amsplain}

\end{document}